\theoremstyle{plain}
\newtheorem{theorem}{Theorem}[section]
\newtheorem{lemma}[theorem]{Lemma}
\newtheorem{corollary}[theorem]{Corollary}
\theoremstyle{definition}
\newtheorem{ex}{Example}
\newtheorem{co}{Comment}
\newtheorem{defn}{Definition}
\newtheorem{notation}{Notation}
\newcommand\dref[1]{Definition~\ref{defn:#1}}
\newcommand\tref[1]{Theorem~\ref{thm:#1}}
\newcommand\lref[1]{Lemma~\ref{lem:#1}}
\newcommand\cref[1]{Corollary~\ref{cor:#1}}
\newcommand\coref[1]{Comment~\ref{co:#1}}
\newcommand\eref[1]{Equation~\eqref{eg:#1}}
\begin{document}
\newcommand\sall{$\sigma$-algebra}
\newcommand\sal{$\sigma$-algebra~}
\newcommand\sals{$\sigma$-algebras~}
\newcommand\ttt{$T,T^{-1}$~transformation~}
\newcommand\tttl{$T,T^{-1}$~transformation}
\newcommand\ttp{$T,T^{-1}$~process~}
\newcommand\ttpl{$T,T^{-1}$~process}
\newcommand\Om{$\Omega$~}
\newcommand\Oml{$\Omega$}
\newcommand\om{$\omega$~}
\newcommand\oml{$\omega$}
\newcommand\omo{$\omega_1$~}
\newcommand\omol{$\omega_1$}
\newcommand\omt{$\omega_2$~}
\newcommand\omtl{$\omega_2$}
\newcommand\omb{$\Omega$~}
\newcommand\ombl{$\Omega$}
\newcommand\ep{$\epsilon$~}
\newcommand\epl{$\epsilon$}
\newcommand\mut{$\mu$~}
\newcommand\mutl{$\mu$}
\newcommand\nut{$\nu$~}
\newcommand\nutl{$\nu$}
\newcommand\de{$\delta$~}
\newcommand\del{$\delta$}
\newcommand\deb{$\Delta$~}
\newcommand\debl{$\Delta$}
\newcommand\al{$\alpha$~}
\newcommand\all{$\alpha$}
\newcommand\be{$\beta$~}
\newcommand\bel{$\beta$}
\newcommand\proa{$...a_{-2}, a_{-1}, a_{0}, a_{1}, a_{2},...$~}
\newcommand\prob{$...b_{-2}, b_{-1}, b_{0}, b_{1}, b_{2},...$~}
\newcommand\proc{$...c_{-2}, c_{-1}, c_{0}, c_{1}, c_{2},...$~}
\newcommand\proX{$...X_{-2}, X_{-1}, X_{0}, X_{1}, X_{2},...$~}
\newcommand\proan{$a_{0}, a_{1}, a_{2},...$~}
\newcommand\probn{$b_{0}, b_{1}, b_{2},...$~}
\newcommand\procn{$c_{0}, c_{1}, c_{2},...$~}
\newcommand\procX{$X_{0}, X_{1}, X_{2},...$~}
\newcommand\proA{$...A_{-2}, A_{-1}, A_{0}, A_{1}, A_{2},...$~}
\newcommand\proB{$...B_{-2}, B_{-1}, B_{0}, B_{1}, B_{2},...$~}
\newcommand\proC{$...C_{-2}, C_{-1}, C_{0}, C_{1}, C_{2},...$~}
\newcommand\proAn{$A_{0}, A_{1}, A_{2},...$~}
\newcommand\proBn{$B_{0}, B_{1}, B_{2},...$~}
\newcommand\proCn{$C_{0}, C_{1}, C_{2},...$~}
\title{Infinite Partitions and Rokhlin Towers}
\author{Steven Kalikow
\thanks{University of Memphis, Department of
Mathematics, 3725 Norriswood, Memphis, TN 38152, USA.}
}

\maketitle
\large {\it This paper is dedicated to the memory of Dan Rudolph.}
\vskip .5 cm
Please see the Dedication to Dan that we included in the paper 
\vskip .5 cm

\noindent $``$Nonintersecting splitting \sals in a non-Bernoulli transformation"
\vskip .5 cm

also in this edition.
\normalsize
\begin{abstract}
We find a countable partition $P$ on\textbf{} a Lebesgue space, labeled $\{1,2,3...$\}, for any non-periodic measure preserving transformation $T$ such that $P$ generates $T$ and for the $T,P$ process, if you see an $n$ on time $-1$ then you only have to look at times $-n,1-n,...-1$ to know the positive integer $i$  to put at time $0$. We alter that proof to extend every non-periodic $T$ to a uniform martingale (i.e. continuous $g$ function) on an infinite alphabet. If $T$ has positive entropy and the weak Pinsker property, this extension can be made to be an isomorphism. We pose remaining questions on uniform martingales. In the process of proving the uniform martingale result we make a complete analysis of Rokhlin towers which is of interest in and of itself. We also give an example that looks something like an i.i.d. process on $\mathbb{Z}^2$ when you read from right to left but where each column determines the next if you read left to right. \end{abstract}
\section{Introduction} The purpose of this paper is to show that the intuition one might have for finite state stationary processes gets destroyed when one considers infinite state stationary processes. The proof of the primary theorem we will use to accomplish this destruction of your intuition can be altered to get a theorem about uniform martingales (also called continuous g functions). This still leaves open questions we have been interested in regarding uniform martingales which we will pose, but we will attempt to suggest the basic technique that can be used to resolve these questions. In the process of getting the uniform martingale result we will need a tool involving Rokhlin towers. Since we need this tool anyway we have decided to clean up the theory of Rokhlin towers in this paper. 
\begin{co}\label{co:infent}You can't accomplish this destruction of your intuition if you insist on looking at partitions with finite entropy. Just about everything which is true of finite partitions is also true of infinite partitions with finite entropy. Hence by necessity all infinite partitions in this paper have infinite entropy. 
\end{co}

Countable and uncountable partitions can be strange. For processes generated by a finite alphabet, one definition of zero entropy is that a process has zero entropy iff the past determines the future. Parry \cite{Parry} Theorem 8.2 page 90 showed that every transformation has a countable generator such that the past determines the future (For the present paper generator means that the whole process (past and future) generates but Parry uses the word generator to mean that the past generates).  Here we improve on that result with \tref{main} below:

\begin{defn} \label{defn:nper} Throughout this paper $T$ is a one to one measure preserving transformation on a Lebesgue space \omb with probability measure $P$ such that $P(\{\omega: T^i(\omega)=\omega$ for any $i\}) = 0$. This is what we mean by a non-periodic transformation.
\end{defn} 

\begin{co}\label{co:tinv} If $T$ is one to one and measure preserving on a Lebesgue space then $T^{-1}$ is also (measurability of $T^{-1}$ is standard theory but not obvious) and if $T$ is non-periodic then $T^{-1} $ is also.
\end{co}

\begin{defn} \label{defn:tp} When $T$ is a measure preserving transformation and $P$ is a partition, the $P,T$ process is the process in which every $\omega \in \Omega$ is assigned to a doubly infinite word $... X_{-2}, X_{-1}, X_0, X_1, X_2,...$  where $X_i$ is the element of $P$ containing $T^i(\omega)$. The word it maps \om to is called its $T,P$ name. The $T,P$ process is the process of $T,P$ names endowed with the measure induced by the map from points to $T,P$ names. Since $T$ is measure preserving it is a stationary process. 
\end{defn} 

\begin{co}\label{co:shift} Let $T$ be a measure preserving transformation $P$ a partition and $... X_{-2}, X_{-1}, X_0, X_1, X_2,...$  be the $P,T$ process, then if the $P,T$ name of \om  is $...a_{-2}, a_{-1}, a_0, a_1, a_2,...$, then the $P,T$ name of $T(\omega)$ is $... b_{-2}, b_{-1}, b_0, b_1, b_2,...$  defined by $b_i :=a_{i+1}$ for all $i$, so the map from \om to its doubly infinite name is a homomorphism onto the doubly infinite sequences of elements of $P$ with induced measure where the transformation on doubly infinite sequences shifts a word to the left. If the $T,P$ process separates points (i.e. if there is a set $Z$ of measure $0$  such that if \omo $\neq$ \omt are not in $Z$ they have different $T,P$ names) then this map is an isomorphism.
\end{co} 

\begin{defn} \label{defn:generates} If the $P,T$ process separates points we say that $P$ generates $T$ which implies that $T$ is isomorphic to the standard shift on the $T,P$ process. In that case by abuse of notation we will say that $T$ is isomorphic to the $T,P$ process.
\end{defn}

Here is the main theorem of the paper.

\begin{theorem}\label{thm:main} Every non-periodic transformation has a countable generator, with pieces labeled $ \{1,2,3,...\}$, such that, letting $h(\omega)$ be the piece of the partition where \om is,  
\\(*) if~ $h(T^{-1}(\omega)) = n$, then
$h(T^{-n}(\omega)), h(T^{1-n}(\omega)), h(T^{2-n}(\omega)),... h(T^{-1}(\omega))$ determines $h(\omega)$.
\end{theorem}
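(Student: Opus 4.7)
The plan is to build $P$ as a limit of partitions refined from a countable generator $Q$ of $T$ (such a $Q$ exists because $T$ is aperiodic on a Lebesgue space). The main tool is a hierarchy of Rokhlin towers of rapidly growing heights.

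First, I would produce Rokhlin towers $\tau_k$ with bases $B_k$ and heights $H_k\to\infty$ very rapidly, with $\mu(\Omega\setminus\tau_k)<2^{-k}$. By Borel--Cantelli, almost every $\omega$ lies in $\tau_k$ for all sufficiently large $k$. I would organize the towers as a Kakutani--Rokhlin hierarchy: $B_{k+1}\subset B_k$ and $H_{k+1}$ a multiple of $H_k$, so that each column of $\tau_{k+1}$ stacks $H_{k+1}/H_k$ consecutive copies of $\tau_k$. At stage $k$, refine $B_k$ into countably many pieces $\{B_k^{(\ell)}\}_{\ell\geq 1}$ approximating $Q$ more and more finely as $k$ grows.

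Second, I would define $h(\omega)\in\{1,2,3,\dots\}$ by $\omega$'s position in this hierarchy. The guiding idea is to use the counter value $h(\omega)=j+1$ on the bulk of levels of the currently active column, which makes (*) automatic there: if $\omega$ sits at level $j\geq 1$, then $h(T^{-1}\omega)=j=:n$, the lookback window of length $n$ covers levels $0,\dots,j-1$ entirely within the column, and the next symbol $j+1$ is determined by the length of the window alone. To distinguish base pieces and thus obtain a generator, I intersperse a sparse set of ``coding levels'' within each column; their symbols encode the piece index $\ell$. Each coding symbol must be \emph{forced} by its own lookback window, and this is arranged by using the nested structure: a $\tau_k$-column sits in the interior of a $\tau_{k+1}$-column, so the longer windows available inside $\tau_{k+1}$ contain all earlier coding levels and uniquely determine the next coding symbol.

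Third, I would verify both requirements. Property (*) reduces to a level-by-level check: on counter levels it is immediate, and on coding levels the chosen symbol is by construction a function of the imposed window. That $P$ generates $T$ follows because $\{B_k^{(\ell)}\}_{k,\ell}$ asymptotically encodes $Q$, so the stream of base-piece indices recorded along the orbit of $\omega$ pins $\omega$ down almost surely.

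The main obstacle will be the bookkeeping at column boundaries. When $\omega$ transitions from the top of one $\tau_k$-column to the base of the next, the lookback window at the first several levels of the new column reaches back into the previous column, and (*) forces the new coding symbols to be consistent with those in the preceding column. The Kakutani--Rokhlin hierarchy is what handles this: a $\tau_k$-boundary sits in the interior of a much longer $\tau_{k+1}$-column, so $\tau_{k+1}$'s own counter structure supplies a common spine that aligns the $\tau_k$-codes across the transition. Carrying this bookkeeping out precisely is exactly what the paper's detailed analysis of Rokhlin towers is designed to enable.
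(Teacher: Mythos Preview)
Your proposal has a genuine gap at the ``coding levels,'' and it stems from a mismatch between the window length that (*) actually grants you and the window length you implicitly assume.

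Property (*) says: if $h(T^{-1}\omega)=n$, then the values at times $-n,\dots,-1$ determine $h(\omega)$. The window length is dictated by the \emph{previous symbol}, not by any ambient tower structure. In your scheme, on ``bulk'' levels you set $h(\omega)=j+1$ (the level plus one). So if a coding level sits at height $j$ of a $\tau_k$-column, the symbol just below it is the counter value $j$; the window you are allowed to inspect is exactly levels $0,\dots,j-1$ of the current column, and those are the counter values $1,2,\dots,j$. They carry no information whatsoever about which base piece $B_k^{(\ell)}$ the column lies over, so the coding symbol cannot be determined from the window. Your appeal to ``the longer windows available inside $\tau_{k+1}$'' does not help: the window length is $j$, full stop, regardless of how $\tau_k$ sits inside $\tau_{k+1}$. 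A secondary issue is that a Kakutani--Rokhlin hierarchy with $H_{k+1}$ an exact multiple of $H_k$ and perfect stacking is not available for a general aperiodic $T$ (it would force an odometer factor); the paper's nested Alpern towers have heights $n_i$ or $n_i+1$, never exact multiples, and in any case the proof of this theorem does not use nesting at all.

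The paper's mechanism is quite different from a counter. It first builds a ``level function'' $f:\Omega\to\mathbb N$ with $|f(T\omega)-f(\omega)|\le 1$ and a bound $g$ guaranteeing that any orbit segment of length $g(i)$ contains a point with $f$-value exceeding $i$. Then $h(\omega)$ is defined by induction on $i=f(\omega)$ as a G\"odel-style encoding
\[
h(\omega)=G\bigl(g(i{+}1),\,a_0,\dots,a_m,\,-1,\,h(T\omega),\dots,h(T^{m-1}\omega)\bigr),
\]
where $m$ is the first positive time with $f(T^m\omega)\ge i$ and the $a_j$ are $P_i$-labels along the orbit. Two features make (*) work: (i) \emph{every} symbol is large, since $h(\omega)>g(f(\omega)+1)$, so the lookback window from any point is guaranteed to contain some $\omega_2$ with strictly higher $f$-value; and (ii) $h(\omega_2)$, being a G\"odel code, single-handedly determines $h$ on a forward segment long enough to include the point in question. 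In short, the paper makes each symbol both long enough (to force the window to reach a ``higher-level'' point) and informative enough (to encode all the $h$-values ahead of it up to the next point of the same $f$-level). Your counter symbols are neither, which is why the coding levels cannot be forced.
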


\begin{co} \label{co:main} This says that every non-periodic measure preserving transformation is isomorphic to a stationary process on the integers such that if there is an $n$ in position $-1$ then the $-1,-2,....-n$ terms determine the $0$ term.
\end{co} 
\begin{co}\label{co:zero}\tref{main}, as in virtually all theorems and definitions in ergodic theory, allows an exceptional set of measure 0. From here on we will not mention that a set of measure 0 must be removed when we make a theorem, definition or any other statement. The reader should assume it. 
\end{co} 

We will accomplish \tref{main} in two ways: 1) in such a way that the future says very little about the past and 2) in such a way that the inverse of this process also obeys (*) (which implies that the future determines the past and furthermore does so very finitistically.) In the process of proving this theorem we prove a very strong version of the Rokhlin tower theorem. 
\begin{defn} \label{defn:ro}
$B$ is said to be the base of a Rokhlin tower of height $H$ if $B,T(B),T^2(B)...T^{H-1}(B)$ are disjoint. In that case the collection $\\B,T(B)...T^{H-1}(B)$ is called a Rokhlin tower of size $H$ and the complement of the union of the tower is called the error set of the tower. 

\end{defn}  

\begin{defn} \label{defn:al}
We will call a Rokhlin tower an Alpern tower if for any \om in error set of the Rokhlin  tower,  $T(\omega)$ is in the base of the tower. 
\end{defn}
\begin{co}\label{co:aliff}
($B$ is the base of an Alpern tower of height $H)~~$ iff   
$\\~~(B,T(B)...T^{H-1}(B)$ are disjoint and $B,T(B)...T^{H}(B)$ cover the space)$~~$iff$~~$(it is a Rokhlin tower of height $H$ and it is impossible for both \om and $T(\omega)$ to be in the error set.)  
\end{co}

Alpern towers were generalized to two dimensions in both \cite{Prik} and \cite{Sahin} and to flows by Rudolph \cite{Rud}.

\begin{defn} \label{defn:ser}
A sequence of two or more Rokhlin towers have nested bases (resp. nested error sets) if the base (resp. error set) of each (except the last term of the sequence when the sequence is finite) contains the base (resp. error set) of the next. \end{defn}  

\begin{co}\label{co:neednest}
We will need to use a sequence of Alpern towers in both \tref{main} and a theorem we will develop later in this introduction, \tref{ext} (below) but in the proof of \tref{ext} we will also need them to have nested bases so we will need to establish the following. \end{co}

\begin{lemma}\label{lem:rohlin} 
Let $T$ be a measure preserving non-periodic transformation. For sufficiently rapidly increasing $n_i$, there are sets $B_i$ such that 
\\1)	$B_{i+1} \subset B_i$ for all $i$
\\2)	$B_i,T(B_i),T^2(B_i)...T^{n_i-1}(B_i))$ are disjoint
\\3)	$B_i,T(B_i),T^2(B_i)...T^{n_i}(B_i)$ cover the space
\end{lemma}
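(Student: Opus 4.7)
The plan is induction on $i$, invoking the Alpern tower theorem at each stage applied to successive induced first-return maps. For the base case $i = 1$, I apply the Alpern tower theorem directly to $T$ to obtain $B_1$ of Alpern height $n_1$, yielding (2) and (3) at level $1$ via \coref{aliff}. For the inductive step, assume $B_1 \supset \cdots \supset B_i$ have been built, and consider the induced first-return map $T_i := T_{B_i} : B_i \to B_i$, which is measure-preserving and non-periodic on $B_i$ (with the renormalized measure) and whose return-time function $r_i$ takes only the two values $n_i$ and $n_i+1$ by the Alpern property at level $i$; this partitions $B_i$ into ``short'' and ``long'' columns.

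Next I apply the Alpern tower theorem to $T_i$ at an intermediate height $k_0$, producing a candidate $C \subseteq B_i$ whose $T_i$-columns have heights $k_0$ or $k_0+1$. Translating back to $T$, each $C$-column is a stack of $k \in \{k_0, k_0+1\}$ $B_i$-sub-columns of heights $n_i$ or $n_i+1$, so its $T$-length is $k n_i + \ell$ where $\ell$ counts the long sub-columns. Writing $n_{i+1} = k_0 n_i + \ell_0$, the two allowed target $T$-lengths $n_{i+1}$ and $n_{i+1}+1$ correspond to four admissible $(k,\ell)$-profiles: $(k_0,\ell_0)$, $(k_0,\ell_0+1)$, $(k_0+1,\ell_0-n_i)$, and $(k_0+1,\ell_0-n_i+1)$. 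Simultaneous feasibility of all four requires $n_i \le \ell_0 \le k_0-1$, and an elementary count shows such an integer $k_0$ exists whenever $n_{i+1} \gtrsim n_i^2$; this is what ``sufficiently rapidly increasing $n_i$'' means, and it rests on the Frobenius-type fact $\gcd(n_i, n_i+1) = 1$.

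The technical heart of the argument, and the main obstacle, is the refinement step: cutting $C$ down to a subset $B_{i+1}$ so that \emph{every} $T$-column above $B_{i+1}$ has one of the four admissible profiles. I would do this by embedding the situation in a very tall $T_i$-Rokhlin tower inside $B_i$ and then, along each tower column, partitioning the long/short label sequence into blocks of size $k_0$ or $k_0+1$ whose long-counts fall in the admissible set, letting $B_{i+1}$ be the union of block-start levels. A greedy partition succeeds because the four admissible long-count windows together span five consecutive integers centered near the expected long-count, and because the rapid growth of $n_i$ ensures $k_0$ can be chosen so that the ergodic density $p = \mu(B_i^{(n_i+1)})/\mu(B_i)$ of long $B_i$-sub-columns sits inside this window with enough slack to absorb the $\pm 1$ fluctuations of $L_j = \#$long in $[j, j+k_0)$ as $j$ shifts. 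Residual error from the tall Rokhlin tower is absorbed by the standard Alpern conversion, $B_{i+1} \subseteq C \subseteq B_i$ gives (1), and the $(k,\ell)$ accounting together with \coref{aliff} gives (2)--(3) at level $i+1$, completing the induction.
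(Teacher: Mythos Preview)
Your inductive strategy has a genuine gap at the refinement step. The claim that ``the four admissible long-count windows together span five consecutive integers'' is false: for $k_0$-blocks the admissible long-counts are $\{\ell_0,\ell_0+1\}$, while for $(k_0+1)$-blocks they are $\{\ell_0-n_i,\ell_0-n_i+1\}$, and these two pairs are separated by a gap of width $n_i-2$, not adjacent. Consequently the greedy blocking cannot succeed in general. Phrased in terms of partial sums: you are trying to cut a sequence $r_1,r_2,\ldots\in\{n_i,n_i+1\}$ into consecutive blocks each summing to $n_{i+1}$ or $n_{i+1}+1$, but the partial sums jump by $n_i$ or $n_i+1$ at each step and can skip the width-two target window entirely. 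For instance with $n_i=3$, $n_{i+1}=20$ and return-time pattern $3,3,3,3,3,3,4,4,4,\ldots$, the partial sums run $3,6,9,12,15,18,22,26,\ldots$, so no initial block sums to $20$ or $21$. The appeal to an ``ergodic density $p$'' does not rescue this: $T$ is not assumed ergodic, and even in the ergodic case the long-count in a window of size $k_0$ is only near $pk_0$ \emph{on average}; individual orbit segments may deviate arbitrarily, and the $\pm 1$ step property of $L_j$ says nothing about where $L_j$ sits in absolute terms.

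More decisively, the paper's own Counterexample~\eref{counter} (cf.\ \cref{noalpern}) shows that the whole inductive scheme---build $B_{i+1}$ inside a previously \emph{fixed} $B_i$---can fail outright: there exist Alpern towers of height $n$ whose base contains the base of no Alpern tower of any height exceeding $n(n-1)+\lfloor n/2\rfloor$. If your construction ever lands on such a $B_i$ (and nothing in your base case or inductive step excludes this), no choice of $n_{i+1}$ rescues the next step. The paper's proof sidesteps this obstruction by never fixing $B_i$ before building level $i+1$: it realises each $B_i$ as a limit $\lim_k B_{i,k}$ of a Cauchy sequence, constructing $B_{I+1,I+1}$ from scratch at stage $I+1$ and then going \emph{back} to modify all earlier $B_{j,I}$ on a small ``changeset'' near $B_{I+1,I+1}$ so that $B_{I+1,I+1}\subset B_{I,I+1}\subset\cdots\subset B_{1,I+1}$. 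The rapid-growth condition on the $n_i$ is used to make the changesets summable in measure (hence the limits exist and still satisfy (2) and (3)), not to make a greedy blocking succeed.
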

\begin{co}\label{co:rap}
When we say that $n_i$ are rapidly increasing we mean more precisely that
\\1)	$n_{i+1} > 2n_i^2 + 2$ for all $i$ and 
\\2) $\sum_{i=1}^{\infty}(n_i^2 /n_{i+1}) <\infty$
\end{co}
\begin{co} This says that for sufficiently rapidly increasing $n_i$ we can get a sequence of Alpern towers of height $n_i$ with nested bases. 
\end{co}

Steve Alpern \cite{Alpern} established (2) and (3) of \lref{rohlin} and the establishment of (1), (2) of \lref{rohlin} and that the union of the sets in (2) cover almost the full space is an old well know result. Here we just combine the techniques of these two proofs together to get all three at once. 

	We will indicate in a comment that by tightening (1) of \coref{rap} we  also get all your dreams to come true; a sequence of Alpern towers with nested bases, nested error sets, and error sets whose size is a rapidly decreasing fraction of the size of the bases. This would appear to completely finish the theory of Rokhlin towers with the ultimate theorem if Lehrer and Weiss \cite{Lehrer} had not opened up a can of worms by asking what happens when you insist that you restrict to a subset of a prechosen set of positive measure. We figured that since we have to prove \lref{rohlin} anyway we might as well complete the theory of Rokhlin towers by finishing what Lehrer and Weiss started (although some future author might decide that this paper does not completely finish the study of Rokhlin towers and might find more to say about them.) However, regarding our study of Rokhlin towers, only \lref{rohlin} is actually used in the rest of this paper and only in the proof of \tref{ext}.
	
	What Lehrer and Weiss show is that under very weak preconditions, for any set of less than full measure there is a Rokhlin tower with any height you wish covering it (except measure 0). In particular you can cover one Rokhlin tower (together with a piece of its error set) with another. This allows you to arrange that a prechosen tower be the first of a sequence of towers with nested error sets and arbitrarily small error sets. In fact since you can choose the height before you choose the set you are covering, not only can the error sets be made to be small but in fact you can make their measure to be a small fraction of the measure of the base of the tower. 
	What if you want the prechosen tower to be the first of two towers with nested bases instead of nested error sets and still get the error set to be small? \lref{rohlin} does not allow you to let the first tower be a prechosen tower. Here the answer is so easy we will present the proof inside the introduction.

\begin{lemma}\label{lem:trivial} Let $T$ be a nonperiodic measure preserving transformation and let $S$ be a set of positive measure which intersects every orbit under $T$ (i.e. for every \om there is a positive $i$ such that $T^i(\omega) \in S$). Then there is a subset of $S$ which is the base of a Rokhlin tower with arbitrarily small (but cannot be forced have measure 0) error set. 
\end{lemma}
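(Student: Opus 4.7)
The plan is to apply the standard Rokhlin tower theorem not to $T$ directly but to the induced first-return map $T_S$ on the probability space $(S, \nu)$, where $\nu = \mu|_S/\mu(S)$. Because $S$ intersects every orbit of $T$, the first-return time $r \colon S \to \mathbb{N}$ is $\nu$-a.e.\ finite, and $T_S$ is $\nu$-measure-preserving and nonperiodic (inheriting nonperiodicity from $T$). For any large $L$ and small $\delta > 0$, Rokhlin's theorem applied to $T_S$ then yields $B_0 \subset S$ such that $B_0, T_S B_0, \ldots, T_S^{L-1}B_0$ are pairwise disjoint in $S$ with $\nu$-union at least $1 - \delta$.

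I would then observe that $B_0$ is automatically a candidate base for a $T$-Rokhlin tower back in $\Omega$: for $b \in B_0$, if $T^j b \in B_0 \subset S$ for some $j \geq 1$, then $j$ must be a $T$-return time to $S$, so $j = \rho_m(b) := r(b) + r(T_S b) + \cdots + r(T_S^{m-1}b)$ and $T^j b = T_S^m b$ for some $m \geq 1$; the $T_S$-tower disjointness then forces $m \geq L$, hence $j \geq \rho_L(b)$. Consequently $B_0, T B_0, \ldots, T^{H-1}B_0$ are pairwise disjoint for every $H \leq \min_{b \in B_0}\rho_L(b)$. To push $H\mu(B_0)$ close to $1$ in the ergodic case, I would apply Birkhoff's ergodic theorem to $T_S$ and $r$ (which lies in $L^1(\nu)$ because Kac's formula gives $\int_S r\, d\nu = 1/\mu(S)$): the averages $\rho_L/L$ converge $\nu$-a.e.\ to the constant $1/\mu(S)$. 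Restricting $B_0$ to $B = B_0 \cap \{\rho_L \geq H\}$ with $H$ slightly below $L/\mu(S)$, and using Egoroff to keep $\nu(B)$ close to $\nu(B_0) \geq (1 - \delta)/L$, a direct computation gives $H\mu(B) \geq 1 - O(\delta)$.

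The main obstacle is the non-ergodic case: $\rho_L/L$ converges only to a $T_S$-invariant function $g = \mathbb{E}_\nu[r \mid \mathcal{I}_{T_S}]$ whose value $1/\mu_\alpha(S_\alpha)$ depends on the ergodic component, so no single $H$ can match $L/g$ across all components at once. I would handle this by invoking the $T$-ergodic decomposition and applying the Rokhlin construction above separately within each component, with the height $L_\alpha = H \mu_\alpha(S_\alpha)$ tailored so that the resulting $T$-tower has uniform height $H$; components where $\mu_\alpha(S_\alpha) < 1/H$ are absorbed into the error, and since $\mu_\alpha(S_\alpha) > 0$ for every component (this is where the hypothesis that $S$ intersects every orbit is used), their total $\mu$-measure tends to $0$ as $H \to \infty$. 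For any prescribed $\epsilon > 0$, choosing $H$ sufficiently large and $\delta$ sufficiently small then yields a $T$-Rokhlin tower with base $B \subset S$ of error less than $\epsilon$.
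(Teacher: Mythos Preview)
Your route through the induced map $T_S$ is quite different from the paper's. The paper stays with $T$: take a Rokhlin tower for $T$ of large height $N$ with base chosen independent of the length-$N$ $\{S,S^c\}$-name partition, cut into name-columns, and in each column whose base has first-hitting time of $S$ below a threshold $M$ pick the first rung lying in $S$; the union of those rungs is the new base, giving height $N-M$ and error at most $3\epsilon$. No induced map, no Birkhoff, no ergodic decomposition.

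Your argument has a real gap at the Egoroff step. To make the error $1-H\mu(B)$ small with $H\sim L/\mu(S)$ you need $\nu(B)=\nu(B_0\cap\{\rho_L\ge H\})$ close to $\nu(B_0)$ \emph{multiplicatively}; since $\nu(B_0)\le 1/L$, this amounts to $\nu(B_0\cap\{\rho_L<H\})=o(1/L)$, which would require $L\cdot\nu(\{\rho_L<H\})\to 0$. Birkhoff (via Egoroff or convergence in measure) only gives $\nu(\{\rho_L<H\})\to 0$, and there is no reason a bad set of fixed $\nu$-measure $\delta'$ should avoid $B_0$, whose measure is of order $1/L\ll\delta'$. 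The repair is to invoke the independence form of Rokhlin's lemma for $T_S$, choosing $B_0$ independent of the partition $\{\rho_L\ge H\},\{\rho_L<H\}$ --- which is exactly the device the paper uses, only for $T$ rather than $T_S$. Your non-ergodic reduction has further loose ends: the rate of Birkhoff convergence and the height $L_\alpha=H\mu_\alpha(S)$ are component-dependent (for $\mu_\alpha(S)$ barely above $1/H$ your $L_\alpha$ is $1$ or $2$, too small for the ergodic argument to bite), and measurable selection of the base across components needs justification. The paper's first-hitting-time argument bypasses all of this in a few lines.
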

\begin{co}\label{always} The precondition is satisfied for any set $S$ of positive measure if $T$ is ergodic.
\end{co}

Proof of \lref{trivial}. Let $f(\omega)$ be the least $i$ such that $T^i(\omega) \in S,~\epsilon >0,~M$ be chosen so large that (the probability that $f(\omega)\geq M) <\epsilon , ~N>M/ \epsilon,~P$ be the partition consisting of $S$ and the complement of $S,~B$ be the base of a Rokhlin tower of height $N$ error set smaller than \ep which is independent of the partition which breaks the space into $N$ names for the $P,T$ process (a standard theorem allows you to choose such a $B$), and then break the the Rokhlin tower into $P$ columns (i.e. call two points in $B$ equivalent if they have the same $P,T$ name of length $N$ and consider the columns above the equivalence classes). A rung of such a column is entirely inside one element of $P$. Consider a column $c$. Call the column good if a point \om in its base obeys $f(\omega)<M$. If $c$ is good let $g(c)$ be the first rung of it in $S$. Then 
$\bigcup\limits_{c~ \text{good}} g(c)$ is the base of a Rohlin tower with height $N-M$ and error set smaller than 3\ep.  \hfill $\Box$
\begin{co}\label{co:notsogood} Letting the $S$ of \lref{trivial} be the base of a prechosen Rokhlin tower gives that prechosen tower to be the first of a sequence of two towers with nested bases such that the error set of the second is as small as you like. This would at first glance make you happy. At second glance you would get upset when you realize that in this proof you have to get the height of the second tower to be big in order to get the error set small. One would want better than that. You would want the size of the error set to be able to be a small fraction of the size of the base. Sorry. We have bad news. 
\end{co} 
\begin{theorem}\label{thm:fraction}
Select a doubly infinite sequence of heads and tails from a fair coin and let $\mu$ be the resulting measure on doubly infinite sequences of heads and tails.  Let $A$ be the event that there is a head at the origin. We will show There does not exist $B \subset A$ such that $B$ is the base of a Rokhlin tower whose error set has measure less than $\mu(B)/6$
\end{theorem}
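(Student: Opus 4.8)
The plan is to prove the sharper statement that $\mu(E)>\mu(B)/6$ for \emph{every} $B\subseteq A$ which is the base of a Rokhlin tower, where $E$ denotes the error set; that is more than enough. Fix such a $B$, say of height $H$, and write $p=\mu(B)$; we may assume $p>0$. Let $T$ be the shift, let $\rho\colon B\to\{H,H+1,\dots\}$ be the first return time of $B$ under $T$ (this is $\ge H$ since the levels $B,TB,\dots,T^{H-1}B$ are disjoint, and it is finite a.e.\ since the Bernoulli shift is ergodic), and let $\mu_B$ be $\mu$ restricted to $B$ and normalized. The $H$ tower levels partition the complement of $E$ and each has measure $p$, so $\mu(E)=1-Hp$; and Kac's formula gives $\int_B\rho\,d\mu=\mu\big(\bigcup_{k\ge0}T^kB\big)=1$, so $\mathbb{E}_{\mu_B}[\rho]=1/p$ and hence $\mu(E)=p\big(\mathbb{E}_{\mu_B}[\rho]-H\big)$. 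Thus it suffices to show that the overshoot $\delta:=\mathbb{E}_{\mu_B}[\rho-H]$ satisfies $\delta>1/6$.

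The first step is an entropy lower bound, and it is the only place where $B\subseteq A$ is used. Let $\mathcal{Q}=\{E,B,TB,\dots,T^{H-1}B\}$, the tower partition of the whole space, and consider a $\mathcal{Q}$-name $w$ of length $n$ with $\mu[w]>0$. The set $S(w)$ of times $t\in[0,n)$ at which $w$ shows the cell $B$ is determined by $w$, and at each such time $T^tx\in B\subseteq A$ forces $x_t$ to be heads; hence $[w]$ lies in an intersection of $|S(w)|$ independent fair-coin events and $\mu[w]\le 2^{-|S(w)|}$. Therefore $H_\mu(\mathcal{Q}_0^{n-1})=\sum_w\mu[w]\log(1/\mu[w])\ge(\log2)\sum_w\mu[w]\,|S(w)|=(\log2)\,\mathbb{E}_\mu\#\{t\in[0,n):T^tx\in B\}=np\log2$, and dividing by $n$ and letting $n\to\infty$ gives $h_\mu(T,\mathcal{Q})\ge p\log2$.

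The second step relates $h_\mu(T,\mathcal{Q})$ to the law of $\rho$. Since the cells of $\mathcal{Q}$ only record which rung of a tower column a point sits in, the $\mathcal{Q}$-process is the Kakutani suspension over $(B,T_B,\mu_B)$, with roof $\rho$, of the process generated by the partition of $B$ according to the value of $\rho$; hence $h_\mu(T,\mathcal{Q})=p\cdot h_{\mu_B}\!\big(T_B,\text{partition by }\rho\big)\le p\cdot H_{\mu_B}(\rho)$ (this is the Abramov--Rokhlin relation; concretely, the $\mathcal{Q}$-name on a window of length $n$ is determined, up to $O(1)$ boundary data, by the return times of the $np+O(1)$ excursions meeting the window, whose joint entropy is at most their number times $H_{\mu_B}(\rho)$ by subadditivity). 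Combining with the first step and cancelling $p>0$, we get $H_{\mu_B}(\rho)\ge\log2$.

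The final step is convex analysis. The variable $\rho-H$ takes values in $\{0,1,2,\dots\}$ with mean $\delta$, and among all $\mathbb{N}_0$-valued distributions with mean $\delta$ the geometric one maximizes Shannon entropy, with value $g(\delta):=(1+\delta)\log(1+\delta)-\delta\log\delta$; so $\log2\le H_{\mu_B}(\rho)=H_{\mu_B}(\rho-H)\le g(\delta)$. Since $g$ is continuous and strictly increasing on $(0,\infty)$ and $g(1/6)=\tfrac76\log\tfrac76+\tfrac16\log6<\log2$ (numerically about $0.478<0.693$), we conclude $\delta>1/6$, hence $\mu(E)=p\delta>\mu(B)/6$, as desired. (In fact the argument yields $\mu(E)\ge\mu(B)\,g^{-1}(\log2)$ with $g^{-1}(\log2)\approx0.295$, so the constant $6$ is far from optimal.) The step I expect to require the most care is the interplay of the first two: everything rests on turning the purely set-theoretic inclusion $B\subseteq A$ into the quantitative fact that the return-time law of $B$ carries at least $\log2$ of entropy, and then on carrying out the Abramov--Rokhlin bookkeeping correctly; the maximum-entropy estimate is routine.
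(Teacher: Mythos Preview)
Your argument is correct and takes a genuinely different route from the paper. The paper argues by contradiction with a direct counting estimate: assuming $\mu(E)<\mu(B)/6$, it looks at a long orbit segment of length $N$, notes that the roughly $k\approx N/H$ visits to $B$ force $k$ prescribed heads (probability $2^{-k}$), and then bounds the number of ways to place the at most $n<N\mu(B)/5$ error visits among the $k$ excursions by $H\binom{n+k}{n}$; Stirling-type approximations show $H\binom{n+k}{n}2^{-k}\to0$, contradicting the ergodic theorem. Your proof packages the same two ingredients---``a $B$-visit pins down a fair coin'' and ``the only freedom in the $\mathcal{Q}$-name is where the error visits fall''---as the two entropy inequalities $p\log 2\le h_\mu(T,\mathcal{Q})\le p\,H_{\mu_B}(\rho)$, and then finishes with the sharp maximum-entropy bound $H_{\mu_B}(\rho)\le g(\delta)$ for the overshoot $\delta$. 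The payoff of your approach is twofold: it is cleaner (no asymptotics or ad hoc estimates), and it yields the strictly better constant $\mu(E)\ge\mu(B)\,g^{-1}(\log 2)\approx 0.295\,\mu(B)$, whereas the paper only obtains $1/6$ and explicitly notes the argument could be pushed further. The paper's approach, on the other hand, is more elementary in that it does not invoke Abramov's formula or the geometric-is-max-entropy fact; it is essentially a first-moment calculation. Your identification of the $\mathcal{Q}$-factor with the Kakutani tower over the return-time process (which is what makes the Abramov step go through) is the only place that needs a line of justification beyond what you wrote, but it is routine once one observes that the $\mathcal{Q}$-name and the $\{B,B^c\}$-name determine each other.
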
 
\noindent(See acknowledgements)
\vskip .2 cm
What about the possibility that we can get something like \lref{trivial} to hold for Alpern towers? Again bad news. We will show
\begin{align}
& \text{COUNTEREXAMPLE: For every integer $n>3$, there exists a Bernoulli} \notag\\
& \text{transformation $T$ and an Alpern tower of height $n$ whose error set} \notag\\
& \text{has any given size less than $1/(n+1)$ and whose base $BB$ does not} \notag\\
& \text{contain the base of  any Alpern tower of height} > n(n-1) + \lfloor n/2 \rfloor \label{eg:counter}
\end{align} 
\begin{co}\label{co:half}Actually $T$ can be chosen before the $n$ and indeed $T$ can be chosen to be the standard 1/2, 1/2 independent process. The point is that the $T$ we actually use has entropy less then that of the 1/2, 1/2 independent process so we can extend it to the 1/2, 1/2 independent process. This causes our set $BB$ to have more subsets but adding more subsets to $BB$ does not harm the proof that the above counterexample works and nor does it harm the proofs of our corollaries. 
\end{co} 

\begin{corollary}\label{cor:noalpern} Let $T$ be the standard 1/2,1/2 Bernoulli. For every $n>3$ there exists an Alpern tower of height between $n$ and $n(n-1) + \lfloor n/2 \rfloor$ which is not the first of any sequence of two Alpern towers with nested bases except if the second tower is equal to the first up to measure 0.
\end{corollary}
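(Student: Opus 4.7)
The plan is to apply the COUNTEREXAMPLE \eref{counter} under \coref{half} with $T$ the standard $(1/2,1/2)$ Bernoulli, obtaining an Alpern tower of height $n$ with base $BB$ such that $BB$ contains no Alpern tower base of height exceeding $M := n(n-1) + \lfloor n/2 \rfloor$. The corollary will follow once I extract from the family of Alpern bases contained in $BB$ an element that is minimal under inclusion (mod $0$): such a base admits no properly nested Alpern base, and by the monotonicity claim below its associated height lies in $\{n, n+1, \dots, M\}$.

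First I would establish the monotonicity claim: if $B' \subsetneq B$ are bases of Alpern towers of heights $H'$ and $H$, then $H' \ge H$. Indeed the Alpern conditions force $\mu(B) \le 1/H$ and $\mu(B') \ge 1/(H'+1)$, so $H' \le H-1$ would give $\mu(B') \ge 1/H \ge \mu(B) > \mu(B')$, a contradiction. Combined with \eref{counter}, every Alpern base strictly inside $BB$ has height in $\{n, n+1, \dots, M\}$.

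Second I build a descending chain $BB = B_0 \supsetneq B_1 \supsetneq \cdots$ of Alpern bases in $BB$: having chosen $B_k$, put $c_k := \inf\{\mu(B) : B \text{ an Alpern base}, B \subset B_k\}$; if $\mu(B_k) = c_k$ I stop, otherwise I pick $B_{k+1} \subsetneq B_k$ Alpern with $\mu(B_{k+1}) < c_k + 2^{-k-1}$, which is possible by definition of infimum once $2^{-k-1}$ is chosen small enough. If the construction stops at $B_K$, then every Alpern base $B \subset B_K$ satisfies $\mu(B) \ge c_K = \mu(B_K)$, forcing $B = B_K$ mod $0$; so $B_K$ is the desired tower. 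Otherwise the heights $H_k$ are non-decreasing in $[n,M]$ and hence eventually constant at some $H^{\ast}$; the $c_k$ are non-decreasing (infimum over a smaller family) and bounded above by $1/n$, so $c_k \nearrow c_\infty$, and the sandwich $c_k \le \mu(B_k) < c_{k-1} + 2^{-k}$ forces $\mu(B_k) \to c_\infty$.

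Set $B_\infty := \bigcap_k B_k$; disjointness of $T^i(B_\infty)$ for $0 \le i \le H^{\ast}-1$ is inherited from the eventual-height towers, and the Alpern covering property follows from the $L^1$ estimate
\[
\mu\Bigl(\Omega \setminus \bigcup_{i=0}^{H^{\ast}} T^i(B_\infty)\Bigr) \le (H^{\ast}+1)\bigl(\mu(B_k) - \mu(B_\infty)\bigr) \longrightarrow 0,
\]
so $B_\infty$ is an Alpern base of height $H^{\ast} \in [n, M]$. Any Alpern base $B \subset B_\infty$ satisfies $\mu(B) \ge c_k$ for all $k$, hence $\mu(B) \ge c_\infty = \mu(B_\infty)$, which together with $B \subset B_\infty$ forces $B = B_\infty$ mod $0$. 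The main obstacle is precisely this limit step---verifying that the intersection of an infinite descending chain of Alpern bases is again Alpern. Disjointness is automatic once the heights stabilize, but the covering condition needs the $L^1$ approximation above, which in turn depends on height stabilization supplied by the monotonicity claim and the counterexample's ceiling $M$.
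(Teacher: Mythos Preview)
Your argument is correct, but it is considerably more elaborate than the paper's. The paper avoids the entire limit construction by exploiting the \emph{height} rather than the measure: since heights of Alpern bases contained in $BB$ are integers bounded above by $M = n(n-1)+\lfloor n/2\rfloor$, one simply picks any Alpern base $B\subset BB$ of \emph{maximal} height $mm$. Then any Alpern base $BBB\subset B$ has height $\le mm$ (it is still inside $BB$), and a short direct argument using the disjointness of the rungs of the first tower shows that if $\mu(B\setminus BBB)>0$ then some $\omega\in B\setminus BBB$ has both $\omega$ and $T(\omega)$ outside the second tower, contradicting the Alpern property. No descending chains, no infima, no intersections are needed.

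What your approach buys is a soft, measure-theoretic existence of a minimal element that never looks at the rung structure of the towers; the price is the limit step you correctly flagged as the main obstacle (stabilising heights, verifying that the intersection is still Alpern). The paper's approach trades that analysis for a two-line combinatorial observation about where $\omega$ and $T(\omega)$ can sit once the second tower's height is forced not to exceed the first's. Your monotonicity claim $B'\subsetneq B \Rightarrow H'\ge H$ is, incidentally, exactly the fact that makes the paper's maximal-height choice automatically minimal---so you were one observation away from the short proof. One small wording issue: ``once $2^{-k-1}$ is chosen small enough'' is awkward since $2^{-k-1}$ is not a free parameter; you really mean to pick $B_{k+1}$ with $\mu(B_{k+1})<\min(\mu(B_k),\,c_k+2^{-k-1})$, which is possible since $c_k<\mu(B_k)$.
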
 
\begin{proof} Select the Alpern tower of counterexample \eref{counter} and that equation implies $\Theta= \{m: m~$ is the height of an Alpern tower whose base is a subset of $BB\} $(It is not empty because $BB$ itself is such a subset so $n \in \Theta$)  has a maximum $mm \leq n(n-1) + \lfloor n/2 \rfloor$. Select a subset $B$ of $BB$ which is the base of such a tower of height $mm$. We claim that the Alpern tower with base $B$ (Henceforth to be called the first base and first tower) serves as the desired tower. Suppose $B$ is the first of two Rokhlin towers with nested bases, the second having base $BBB$. If the height of the second tower is only 1 it cannot be Alpern because the base is too small. Suppose there is an \om $\in B\setminus BBB$. Since the height of both towers exceeds 1 and since the height of the second tower does not exceed the height of the first tower because $mm$ is the maximum, it is easy to see that both \om and $T(\omega)$ fail to be in the second tower so if $B \setminus BBB$ has positive measure then the second tower is not Alpern.\end{proof}
Going back to arbitrary Rokhlin towers, Since Leher and Weiss get an arbitrary tower to be the first of a sequence of two with nested errors such that the second has small error and \lref{trivial} gets it to be the first of two with nested bases such that the second has small error we can hope that we can get it to be the first with both nested bases and nested errors with small error for the second. Bad news.
\begin{corollary}\label{cor:nestbaserr} Let $T$ be the standard 1/2, 1/2 Bernoulli transformation. There is a Rokhlin tower of arbitrarily large height which is not the first of any sequence of two Rokhlin towers with nested bases and nested error sets unless the second is the same as the first except for measure 0.
\end{corollary}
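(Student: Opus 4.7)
The plan is to reduce to \cref{noalpern}. For any $n>3$, let $(B,h)$ denote the Alpern tower it supplies, of height $h\in[n,n(n-1)+\lfloor n/2\rfloor]$, with error set $E$; since Alpern towers are Rokhlin towers and $h\ge n$ can be taken arbitrarily large, this already produces Rokhlin towers of arbitrarily large height. The strategy is to show: if $(B',H')$ is any Rokhlin tower with $B'\subset B$ and error set $E'\subset E$, then $(B',H')$ is itself an Alpern tower; \cref{noalpern} then forces $(B',H')$ to coincide with $(B,h)$ up to measure zero, which is exactly the statement of the corollary.

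To verify the Alpern claim, I take $\omega\in E'\subset E$, and since $(B,h)$ is Alpern I have $T(\omega)\in B$. Suppose for contradiction that $T(\omega)\in B\setminus B'$. The nested-error condition yields the chain
\[
B\setminus B'\subset B\subset \Omega\setminus E\subset\Omega\setminus E'=\bigsqcup_{k=0}^{H'-1}T^k(B'),
\]
so $T(\omega)\in T^k(B')$ for some $0\le k\le H'-1$. The case $k=0$ is ruled out since $T(\omega)\notin B'$, and for $1\le k\le h-1$ we have $T^k(B')\subset T^k(B)$, which is disjoint from $B$, contradicting $T(\omega)\in B$. Hence $k\ge h$, and applying $T^{-1}$ gives $\omega\in T^{k-1}(B')$ with $0\le k-1\le H'-2$. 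This places $\omega$ itself inside the second tower, contradicting $\omega\in E'$. Therefore $T(\omega)\in B'$, so $(B',H')$ is Alpern.

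I expect the main obstacle to be exactly this Alpern step: it is the nested-error inclusion $E'\subset E$ that is doing all the work, because without it there is no reason to place $B\setminus B'$ inside the second tower, and the argument collapses. Once the claim is in hand, \cref{noalpern} immediately concludes that $(B',H')$ must agree with $(B,h)$ up to measure zero, so the chosen $(B,h)$ has the rigidity asserted in the corollary.
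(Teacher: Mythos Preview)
Your proof is correct and follows the same route as the paper: take the Alpern tower from \cref{noalpern} and show that any second tower with nested base and nested error set must itself be Alpern, whence \cref{noalpern} forces it to coincide with the first. Your verification of the Alpern property is more laborious than necessary, though: the paper simply invokes the characterization in \coref{aliff} that a tower is Alpern iff no two consecutive orbit points lie in its error set, and since $E'\subset E$ with the first tower Alpern, it is immediate that no $\omega,T(\omega)$ can both lie in $E'$---this uses only the nested-error hypothesis, not nested bases.
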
 
\begin{proof} The Alpern tower of \cref{noalpern} serves as our example. In fact that corollary explicitly says \cref{nestbaserr} once one makes the obvious observation that if a sequence of two Rokhlin towers has nested error sets, then if the first is Alpern the second is also Alpern.
\end{proof}

Here are some perverted examples when the alphabet is uncountable. Jonathan King \cite{king} provided a stationary process $ ... X_{-2}, X_{-1}, X_0, X_1, X_2,...$ where each $X_i$ is uniformly distributed on the unit interval and if you list a subsequence $X_{a_{0}}, X_{a_{1}}, X_{a_{2}},...$ in which $a_{i+1} \geq a_i+1$ for every $i$, the subsequence is i.i.d. but where $ X_0$ and $X_1$ determine all $X_i$. Rokhlin \cite{Rokhlin} noticed that any process has an uncountable generator where the conditional entropy of the present given the future is the full entropy of the process (in fact reading backwards in time we get a Markov chain) and yet each term determines the next. This is an easy example so we will present it in this paper. Then in this paper we will introduce a particularly perverted example of this. Here we give an easy example of a $\mathbb{Z}$ action on an uncountable state space which looks somewhat like a $\mathbb{Z}^2$ action on $0$s and $1$s consisting of i.i.d. independent random variables taking one value with probability $1/3$ and the other with probability $2/3$ if you read from right to left but where one column determines the next if you read from left to right.
 
\begin{defn} \label{defn:simiid} We say that a process (in general not stationary) 
\\$... X_{-2}, X_{-1}, X_0, X_1, X_2,...$  is similar to an i.i.d. $1/3, 2/3$  process (in this paper we will simply refer to this as a blue process) if the $X_i$ are independent and each $X_i$ takes on one of the following distributions;
1 with probability 1/3 and 0 with probability 2/3, or 0 with probability 1/3 and 1 with probability 2/3.
\end{defn}

\begin{ex}\label{ex:iid} The example we introduce in this paper is of a set of random variables $X_{i,j}$ taking values in $\{0,1\}$, for all integers $i,j$, such that the columns form a stationary process (a one dimensional stationary process on an uncountable state space) and each column determines the next, but that reading from right to left (i.e. if you invert the $\mathbb{Z}$ action), the process is a Markov chain where conditioned on a given column the next column is similar to an i.i.d. $1/3,2/3$ process.
\end{ex}

This is an easy example so perhaps the reader may want to try to come up with it himself before reading this paper. 

\begin{defn} \label{defn:um} A Uniform Martingale (also called a continuous $g$ function) is a stationary process where the convergence of 

~\\the probability of a given letter occurring at time 0 given the $n$ past 

~\\ to 

~\\the probability of that letter occurring  at time 0 given the entire past 

~\\is actually uniform convergence (i.e. where you only have to know $n$ to know how close you are regardless of what past you are talking about).
\end{defn}
\begin{co}\label{co:ptop}This is equivalent to saying that the function taking the past to the probability of a given letter occurring at time 0 given the past is a continuous function of the past when the past is endowed with product topology. For this reason the phrase ``continuous $g$ function" is used to define the probability of the present given the past.
\end{co}

Uniform Martingales have been studied in many papers including \cite{kal}, \cite{wei}, \cite{um1}, \cite{um2}, \cite{um3}, \cite{um4}. In \cite{kal}, and \cite{wei} it was shown respectively that the \ttt and any zero entropy transformation can be extended to a uniform martingale on a finite state space (where the extension takes the form of the composition of three homomorphisms $T \times B  \rightarrow  U  \rightarrow  S \rightarrow T$ where $T$ is the transformation being extended, $S$ is the uniform martingale and the existence of the homomorphism $S \rightarrow T$ is precisely what is meant by saying that $S$ extends $T$. $B$ is a Bernoulli transformation with arbitrarily entropy (a specific Bernoulli transformation was used but when reading those papers it will be obvious that any Bernoulli transformation could have been used). In both of these papers a similar technique is used and the alphabets considered were finite. The technique involved noting that many pieces of past separately told you the present. We will repeat this technique in this paper. The construction we make for \tref{main} does not cause many pieces of past to tell you the present but we will modify it so that it does. Then we will use the same $T \times B  \rightarrow  U  \rightarrow S \rightarrow T$ proof to extend to a uniform martingale on a countable state space. Actually, although we can carry out such a proof and get $S$ as the extension (and we will) in fact we could use $U$ as our desired extension since the only reason for dropping from $U$ to $S$ in \cite{kal}, and \cite{wei} was to get a finite alphabet but here $S$ ends up with a countably infinite alphabet anyway so we might as well use $U$. Our motivation for dropping to $S$ in this paper is that perhaps this can help the reader to solve some of our open problems. Let us be more specific about what we will prove. It is easy to see that \dref{um} is equivalent to the following for finite alphabet processes.

\begin{defn} \label{defn:umt}
 
 A stationary process is a uniform martingale iff the measure on the present given the $n$ past converges uniformly as $n$ approaches $\infty$ on all pasts in the variation metric.
\end{defn}
However the \dref{um} and \dref{umt} are not equivalent when you pass to a countable alphabet (\dref{umt} is stronger). 
\vskip .5 cm
\noindent {\bf Henceforth for both finite and countably infinite alphabets we use \dref{umt} rather than \dref{um} as the definition of uniform martingale.} 
\vskip .5 cm
\noindent This makes the following theorem as strong as possible.

\begin{theorem}\label{thm:ext} a) Every non-periodic transformation can be extended to a uniform martingale on a countable state space. 

b) In fact, if the transformation can be written as the product of another transformation and a nontrivial Bernoulli process, (e.g. if the transformation has positive entropy and obeys the weak Pinsker condition) then this extension can be made to be an isomorphism.
\end{theorem}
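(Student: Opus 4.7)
The plan is to build on Theorem~\ref{thm:main} by upgrading its single-marker structure into a multi-scale marker structure that, after taking an independent Bernoulli product, yields uniform convergence of the conditional distributions. Specifically, in Theorem~\ref{thm:main} the integer at time $-1$ marks out exactly one past window that determines the present. For a uniform martingale we need that \emph{for every past}, all sufficiently long windows of the past give a conditional distribution on the present that is close, in total variation, to the full conditional distribution, with a rate independent of the past. My approach is to produce, at each of a sequence of scales $n_i$ (from Lemma~\ref{lem:rohlin}), enough redundant markers in the past so that, after convolving with an independent Bernoulli process $B$ to ``choose'' a marker, the conditional measure given any past of length exceeding $n_i$ has variation distance at most $\epsilon_i \to 0$ from the limit.

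I would first apply Lemma~\ref{lem:rohlin} to obtain nested Alpern towers with bases $B_1 \supset B_2 \supset \cdots$ and rapidly increasing heights $n_1, n_2, \ldots$. Working tower by tower, I would refine the labeling from Theorem~\ref{thm:main} so that within the $i$-th tower, many different positions carry a self-identifying ``scale $n_i$'' marker and, whenever such a marker sits at position $-j$ (for $j$ in a dense subset of $\{1,\ldots,n_i\}$), the symbols from $-j - n_i$ to $-1$ determine the present. The nesting of the towers is exactly what lets the scales coexist without conflict: each finer construction lives inside the previous tower and only needs to be executed on its base. Then, forming the product $T \times B$ with a Bernoulli shift, I would use the independent randomness in $B$ to randomly shift which marker is being read; the random choice, being independent of everything in the $T$-coordinate, guarantees that with probability at least $1 - \epsilon_i$ the chosen window of length $n_i$ actually determines the present. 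This is precisely the $T \times B \to U$ step borrowed from~\cite{kal} and~\cite{wei}.

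Having produced $U$, I would define $S$ by collapsing certain auxiliary coordinates of $U$ while keeping the labels that witness the determination of the present by the past; the point is to reduce to a countable alphabet while preserving the property that the conditional distribution of time $0$ given any past of length $N \geq n_i$ differs from the full conditional in variation by at most $\epsilon_i$. Passing to $S$ is the step $U \to S$; the map $S \to T$ is then given by forgetting the extra structure and projecting onto the partition supplied by Theorem~\ref{thm:main}, so $S$ extends $T$. The uniformity is inherited from the scale-by-scale construction because the $\epsilon_i$ bound depends only on $N$ and not on the past.

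For part (b), the observation is that the Bernoulli coordinate $B$ is only used as a source of independent randomness, and all information in $B$ can be read off from the $S$-name. If $T$ is itself isomorphic to $T' \times B$ for a Bernoulli factor $B$, I would use that internal Bernoulli factor in place of an external one throughout the construction. Then the extension $S \to T$ is entropy-preserving and one-to-one modulo measure zero, hence an isomorphism. The main obstacle I anticipate is the simultaneous coordination of the refinements at successive scales: the markers of scale $n_{i+1}$ must be embedded \emph{inside} the already-labeled scale-$n_i$ structure without spoiling the determination property at scale $n_i$, and the error bounds $\epsilon_i$ have to sum in a way compatible with the rapid-growth hypothesis on $n_i$ from Comment~\ref{co:rap}. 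This bookkeeping — rather than any single conceptual step — is where the technical heart of the proof lies, and it is exactly why nested Alpern towers with the precise growth condition of Lemma~\ref{lem:rohlin} are the right tool.
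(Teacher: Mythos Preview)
Your plan follows the same architecture as the paper: nested Alpern towers from Lemma~\ref{lem:rohlin}, a modified generator $h$ carrying redundant forward information at every scale, an independent i.i.d.\ ``lookback'' process crossed in, and a collapse $U\to S$ to produce the uniform martingale; part~(b) via the internal Bernoulli factor is also exactly what the paper does.

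Two points where your sketch diverges from the actual mechanism are worth flagging. First, the redundancy in Step~1 is not that ``the symbols from $-j-n_i$ to $-1$ determine the present'' when a marker sits at $-j$; rather, the single value $h(T^{-j}\omega)$ at a point with $f(T^{-j}\omega)=i+k$ already encodes $h(\omega')$ for \emph{every} $\omega'$ in its forward window of length $A_{i+k}$ with smaller $f$-value, so one symbol does the work. Second, and more substantively, the passage $U\to S$ is not ``collapsing certain auxiliary coordinates.'' What the paper (following \cite{kal}, \cite{wei}) does is an iterated erasure: set $c_i$ to ``?'' whenever none of $c_{i-1},\dots,c_{i-a_i}$ still determines it, repeat, and let $b_i$ be the limit. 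This limit process satisfies the hypotheses of Lemma~\ref{lem:lbsgiveum} \emph{by construction}, which is where uniformity comes from; the redundancy is used afterwards, in Step~5, to show that enough symbols survive the erasure that the $b$-process still determines the $c$-process. Your description has the Bernoulli coordinate ``choosing a marker'' so that a single window works with probability $1-\epsilon_i$, but the lookback $a_i$ is a random \emph{length}, not a random marker choice, and the survival argument runs over an infinite chain of scales $j_1,j_2,\ldots$, each protecting the next. Since you cite \cite{kal} and \cite{wei} for this step, the gap is one of exposition rather than strategy, but the question-mark iteration is the technical heart and your sketch does not yet reflect it.
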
 

The reason that we bring up this topic in the current paper is that it is almost an application of \tref{main}; but not precisely. We have to alter  the proof of \tref{main} but we use the same basic technique of proof. However, here we need Alpern towers with nested bases.

Obviously it would be nice if we could reduce this to a uniform martingale on a finite alphabet. Here are questions we hope someone will be able to solve.

~\\Question 1: Can every transformation with finite alphabet be extended to a uniform martingale with finite alphabet? If a countable alphabet is needed, can the measure on the alphabet be made to have finite entropy?

~\\Question 2: Is every positive entropy transformation with finite alphabet isomorphic to a uniform martingale with finite alphabet? (It is easy to see that this is impossible for an aperiodic zero entropy process to be a uniform martingale on a finite alphabet). What about the special case where the transformation can be written as a product of a Bernoulli and another transformation? 

~\\Question 3: Is every transformation isomorphic to a uniform martingale on a countable alphabet even if it does not obey weak Pinsker? (It is not known whether or not there is a transformation that does not obey weak Pinsker)

~\\Question 4: Suppose there is a uniform martingale on a 3 letter alphabet with entropy less than log(2). Is it isomorphic to a uniform martingale on a 2 letter alphabet? Can it at least be extended to a uniform martingale on a 2 letter alphabet?

\section{Acknowledgements}
Our proof of \tref{fraction} was simplified by Paul Balister. It is his simplified proof that we use here. It is not only simpler but also better in the sense that with our original proof we would not have gotten a fraction anywhere near as big as 1/6 in the statement of the result. Karen Johannson noticed that my original \dref{newh} was insufficient.

\section{Rokhlin Towers}
 THIS SECTION CONSISTS OF
 
 \noindent Proof of \lref{rohlin} together with an extension of \lref{rohlin} when \coref{rap} is  tightened

\noindent Proofs of \tref{fraction} and Counterexample \eref{counter}
\vskip .5cm 

xxx   Proof of \lref{rohlin}
\vskip .5cm

Preparation for proof of \lref{rohlin}:
\vskip .4cm
Sublemma: Let $m,n$ be integers $m>0$ and $n\geq m(m-1)$. Then $n$ can be written as a sum of numbers each of which is either $m$ or $m+1$. 
\begin{proof} Write $n$ as $km +r$ where$ ~0\leq r \leq m-1$ and $k\geq(m-1)$. Then $n=(k-r)(m)+r(m+1).$
\end{proof}
\begin{defn} \label{defn:sym}
\deb means symmetric difference.
\end{defn}
\begin{defn} \label{defn:dis}
The distance between two sets $A$ and $B$ is the measure of $A \Delta B$.
\end{defn}
\begin{co}\label{co:met}
It is easily seen that distance defined above defines a complete metric on the class of measurable sets where two sets are regarded to be the same if their distance is 0.
\end{co}
\begin{co}\label{co:cauchy}
If a sequence of sets forms a Cauchy sequence using the above metric, then they converge to a set. However that set is only defined up to measure 0, i.e. if a set is the limit of such a sequence and another set has distance 0 from that set then the latter set is also a limit of the sequence. This is not a problem for us because in ergodic theory, when two sets differ by a set of measure 0 we regard them as the same set. 
\end{co}

Before giving a proof of \lref{rohlin} we first give an idea of the proof. The reader has to read the idea however because it is actually part of the proof. 
\vskip .3 cm
Idea of proof of \lref{rohlin}: We will define all the sets $B_i$ we are looking for as a limit of sets which form a Cauchy sequence in the above metric. Each $B_i$ will be defined in such a way letting $B_{i,k}$ be the $k^{\text{th}}$ approximation of $B_i$  

\noindent (here $B_{i,k}$ will only be defined for $k \geq i$) 

\noindent and insisting that the following two statements hold:
\begin{equation}\label{eg:si}
\left.
\begin{aligned}
&\text{If we fix $k$ and substitute $S_i$ := $B_{i,k}$  for $B_i, ~i\leq k$,}
\\ 
&\text{(1) of \lref{rohlin} will hold  for all $S_i$ for $i < k$, and (2) and (3)}
\\ 
&\text{of \lref{rohlin} and will hold for $S_i$, for all $i \leq k$.}
\end{aligned}
\right\}
\end{equation}

In this paper, we will frequently refer to \eref{si} explicitly with the phrase (referring to \eref{si}).

\begin{equation}\label{eg:summable}
\begin{aligned}
&\text{For fixed $i$, the distances between $B_{i,k}$ and $B_{i,k+1}$ will be summable.}
\\ 
&\text{(This condition forces Cauchy).}
\end{aligned}
\end{equation}

Once we define the $B_{i,j}$ and establish \eref{si} and \eref{summable} it follows that as $k$ approaches infinity, $B_{i,k}$ converges to a set $B_i$ and \lref{rohlin} will hold. 

\vskip.5cm

Proof of \lref{rohlin}:

\begin{defn} \label{defn:gomS}
 For any \om $\in$ \omb and any $S \subset$ \omb, $J(\omega,S)$ is the least positive integer such that $T^{ J(\omega,S)} (\omega) \in S$.\end{defn}
 \begin{co}\label{infinite} Theoretically, if $T$ is not ergodic, $J(\omega,S)$ could be infinite (i.e. if $T^i(\omega)$ may fail to be in $S$ for any $i$) but in this proof we will be careful to use sets $S$ such that $J(\omega, S)$ is infinite only on a set of measure $0$.
 \end{co}
 \begin{co}\label{co:meaningg}
Note that (2) and (3) of \lref{rohlin} are equivalent to saying that $J(\omega,B_i)$ is either $n_i$ or $n_i +1$ for any $\omega \in B_i$. \end{co}

[Definition of $s$: 
It is known that, given that $T$ is aperiodic, one can select a set $s$ such that the sets
\begin{equation}\label{eg:disjoints} \text{$T^i(s)$ are disjoint for $0 \leq i < n_1(n_1-1)$}
\end{equation}
and 
\begin{equation}\label{eg:covers} \bigcup_{i=0}^{\infty} T^i(s)=\Omega
\end{equation}
which implies $J(\omega, s)$ is infinite only on a set of measure 0.
There would exist such an $s$ no matter what positive integer is used instead of $n_1(n_1-1)$. \eref{covers} (no matter what small set $s$ we use) is immediate if $T$ is ergodic and to get such an $s$ when $T$ is not ergodic takes a little work but is not hard and is known.] 
\vskip .5cm
Construction of $B_{1,1}$:
\vskip .5cm
We can partition $s$ into sets $s_i$ where \om $\in s_i$ iff $\omega \in S$ and $J(\omega,s) = i$. By \eref{disjoints}, $T^i(s)$, $0<i<n_1(n_1-1)$, is disjoint from $T^0(s)=s$ ($T^0$ is the identity map for any transformation $T$), and thus $s_i =\emptyset$ when $i<n_1(n_1-1)$. By the sublemma start with a nonempty $s_i$ and write $i$ as a sum of some numbers (say $j_i$ numbers) $a_0, a_1, a_2,... a_{j_i}$ each of which is either $n_1$ or $n_1+1$. We then let 
\\
\\
$B_{1,1}=\bigcup\limits_{\{i:s_i \neq \emptyset\},~0 \leq q < j_i}T^{\sum_{k=1}^{q}a_k}(s_i) $, 
\vskip.5cm
\noindent and (2) and (3) of \lref{rohlin} hold (referring to \eref{si}) for $B_{1,1}$. Note that when $q=0, \sum_{k=1}^{q} a_k=0$ so $S \subset B_{1,1}$. 
\vskip.5cm
Now fix an $I$ and we assume we have defined $B_{i,j}$ for all $i\leq j \leq I$ obeying (2) and (3) of \lref{rohlin} and (1) of \lref{rohlin} for $i<j$ (referring to \eref{si}). We wish to define $B_{j,I+1}$ for all $j \leq I+1$ obeying (2) and (3) and (1) of \lref{rohlin} for $j<I+1$(referring to \eref{si}). 
\vskip .5cm
Construction of $B_{I+1,I+1}$:
\vskip .5cm
Construct $B_{I+1,I+1}$ precisely the same way that we constructed $B_{1,1}$ replacing $n_1$ with $n_{I+1}$ throughout the construction (this involves redefining $s$ accordingly, i.e. using a different $s$).
\vskip .5cm
Construction of $B_{j, I+1}$ for $j < I+1$:
\vskip .5cm
Our strategy will be to first define $B_{I,I+1}$ then $B_{I-1,I+1}$ then, $B_ {I-2,I+1}$ etc. until we define $B_{1, I+1}$. The strategy for getting from $B_ {i, I+1}$ to $B_ {i-1,I+1}$ will be the same for all $i$ except $i=I+1$ so we will first show how to get from $B_{I+1,I+1}$ to $B_{I,I+1}$, then more quickly indicate how to get from $B_{I,I+1}$ to $B_{I-1,I+1}$ and then even more quickly indicate how to accomplish the general case. The increase in the brevity of our explanation at each stage is justified by the assumption that as we go from one case to the next we expect the reader to see the pattern.
\newline \indent Fix an $\omega \in B_{I+1,I+1}$. We are now about to define values $a(\omega)$ and $b(\omega)$ which will both exist and satisfy $a(\omega) < b(\omega)$. Recall that since $\omega \in B_{I+1,I+1}$, it follows that $J(\omega,B_{I+1,I+1})$ is either $n_{I+1}$ or $ n_{I+1}+1$.  \begin{defn} \label{defn:aombom}
 
Let $a(\omega)$ be the least integer greater than $(n_I)(n_I-1)$ such that $T^{a(\omega)}(\omega) \in B_{I,I}$ and let $b(\omega)$ be the greatest integer less than  $J(\omega,B_{I+1,I+1}) - (n_I)(n_I-1)$ such that $T^{b(\omega)}(\omega)\in B_{I,I}$. 
\end{defn} 
We wish to emphasize that $\omega \in B_{I+1,I+1}$ and that for such \om 
\begin{equation}\label{eg:BII} T^{a(\omega)}(\omega) \in B_{I,I} \text{ and }T^{b(\omega)}(\omega) \in B_{I,I}
\end{equation}

Since, by induction, (3) of \lref{rohlin} holds for $B_{I,I}$,(referring to \eref{si}) it follows that for any \omol, $J(\omega_1,B_{I,I})\leq n_I+1$. For $\omega \in B_{I+1,\hskip .1cm I+1}$, applying that to 
$\omega_1 = T^{n_I(n_I-1)}(\omega)$ we have that 
\begin{equation}\label{eg:abound} 
a(\omega) \leq n_I(n_I-1) +n_I +1 = n_I^2 + 1\end{equation}  
and applying it to $\omega_1 = T^{n_{\raise-2pt\hbox{$\scriptscriptstyle I+1$}}-n_{\raise-2pt\hbox{$\scriptscriptstyle I$}}(n_{\raise-2pt\hbox{$\scriptscriptstyle I$}} - 1)-n_{\raise-2pt\hbox{$\scriptscriptstyle I$}} - 1}(\omega)$ 
we have that
\begin{equation}\label{eg:bbound} 
b(\omega)\geq n_{I+1}  - n_I(n_I-1) - n_I -1 = n_{I+1} - n_I^2 -1\end{equation} 
so by (1) of \coref{rap}, $a(\omega)<b(\omega)$. Keeping in mind that $T^0$ is the identity transformation for any transformation $T$, 

\begin{defn} \label{defn:changeset}

let $changeset = \bigcup\limits_{\omega \in B_{I+1,I+1}}\{T^i(\omega):0 \leq i\leq a(\omega)~ \text{or}~b(\omega)\leq i <J(\omega,B_{I+1,I+1})\}$.
\end{defn}
The ``0" in the expression $0\leq i\leq a(\omega)$ in the definition of $changeset$ establishes that $B_{I+1,I+1} \subset changeset$.
Let $sameset$ be the complement of $changeset$.

\vskip .5cm

For all $j\leq I$ we will construct $B_{j,I+1}$ in such a way that $B_{j,I+1} \cap sameset$ = $B_{j,I} \cap sameset$. This establishes \eref{summable} because it means that the measure of $B_{j,I+1} \Delta B_{j,I}$ is bounded above by the measure of $changset$ which (by i and ii below) is bounded above by $2[n_I^2+1]/n_{I+1}$ so \eref{summable} follows from (2) of \coref{rap}. 

\vskip .5cm

i) By the definition of $changeset$, \eref{abound} and \eref{bbound}, 
\newline $changeset \subset [\bigcup\limits_{i=0}^{n_I^2+1}T^i(B_{I+1,I+1}]~\cup~[\bigcup\limits_{i=1}^{n_I^2+1}T^{n_{\raise-2pt\hbox{$\scriptscriptstyle I+1$}}-i}(B_{I+1,I+1}))]$

\vskip .5cm

ii) Since $T$ is measure preserving, for all $k$, $T^k(B_{I+1,I+1})$ has the same measure as $B_{I+1,I+1}$ which is less than or equal to $1/n_{I+1}$ because $B_{I+1,I+1}$ satisfies (2) of \lref{rohlin} (referring to \eref{si}).

\begin{co}\label{co:changesetv} The right way to visualize $changeset$ is to think of it as a union of orbit intervals from $T^{b(\omega_1)}(\omega_1)$ to $T^{a(\omega_2)}(\omega_2)$ where $\omega_1 \in B_{I+1,I+1}$ and
\begin{align}
&  \omega_2 = T^{J(\omega_1,B_{I+1,I+1})}(\omega_1), \text{ i.e.,the next element of the orbit of \omo after \omo}\notag \\
&\text{which is in } B_{I+1,I+1}.\notag
\end{align}
Throughout this section the way we will change $B_{j,I}$ to get $B_{j,I+1}$ will be to change it on such intervals so that $B_{j,I}$ ``matches up" with $B_{j,I+1}$ on both the $T^{b(\omega_1)}(\omega_1)$ and $T^{a(\omega_2)}(\omega_2)$.
\end{co} 

\vskip .5cm

Construction of $B_{I,I+1}$:

\vskip .5cm

We want to alter $B_{I,I}$ on $changeset$ to get $B_{I,I+1}$ so that (2) and (3) of \lref{rohlin} continues to be true for $B_{I,I+1}$ (referring to \eref{si}) and furthermore so that $B_{I+1,I+1}$ becomes a subset of $B_{I+1,I}$.

By the sublemma, using the same technique we used to construct $B_{1,1}$ and $B_{I+1,I+1}$, for every $\omega \in B_{I+1,I+1}$, we can produce a finite set of points in $changeset$

\vskip.5cm

$S_1(\omega) \subset   \{T^i(\omega):0\leq i\leq a(\omega)\}$, 
\vskip.5cm
listing the terms of $S_1(\omega)$ as
\vskip.5cm
$S_1(\omega) =\{d_0, d_1, d_2,... d_k\}$ 

\vskip.5cm

and we can produce a finite set of points in $changeset$

\vskip.5cm

$S_2(\omega) \subset \{ T^i(\omega):b(\omega)\leq	 i<J(\omega, B_{I+1,I+1},)\},$
\vskip.5cm
listing the terms of $S_2(\omega)$ as
\vskip.5cm
$S_2(\omega) =\{e_0, e_1, e_2,... e_n\}$ 

\vskip.5cm
in such a way that 

\vskip.5cm

$d_0=\omega$, either $T^{n_{\raise-2pt\hbox{$\scriptscriptstyle I$}}}d_i= d_{i+1}$ or $T^{n_{\raise-2pt\hbox{$\scriptscriptstyle I$}}+1}d_i = d_{i+1}$ for $i<k;~ d_k= T^{a(\omega)} (\omega)$,

\vskip.5cm

$e_0 = T^{b(\omega)} (\omega)$, either $T^{n_{\raise-2pt\hbox{$\scriptscriptstyle I$}}}e_i = e_{i+1}$ or $T^{n_{\raise-2pt\hbox{$\scriptscriptstyle I$}}+1}e_i= e_{i+1}$ for $i<n$, and either 
\indent $T^{n_{\raise-2pt\hbox{$\scriptscriptstyle I$}}}e_n = T^{J(\omega,B_{I+1,I+1})} (\omega)$ or $T^{n_{\raise-2pt\hbox{$\scriptscriptstyle I$}}+1}e_n = T^{J(\omega, B_{I+1,I+1})}(\omega)$.

\vskip.5cm

Let $B_{I,I+1}= [B_{I,I}\cap sameset] \cup [\bigcup\limits_{\omega \in B_{I+1,~I+1}}( S_1(\omega) \cup S_2(\omega))].\\~~~~$ Note that for $\omega \in B_{I+1,I+1}$,

\begin{equation}\label{eg:inducab} T^{a(\omega)}(\omega) \in B_{I,I+1} \text{ and }T^{b(\omega)}(\omega) \in B_{I,I+1}
\end{equation}

For $ \omega \in B_{I+1,I+1}$, the fact that we insist that $d_0=\omega$ in our definition of $S_1(\omega)$ establishes that $B_{I+1,I+1} \subset B_{I,I+1}$. It is easy to see (2) and (3) of  \lref{rohlin} (referring to \eref{si}) for $B_{I,I+1}$ if you keep in mind that $T^{a(\omega)}(\omega)$ and $T^{b(\omega)}(\omega)$ are both in $B_{I,I} \cap B_{I,I+1}$, (by \eref{BII} and \eref{inducab}), that $ B_{I,I+1} \cap sameset = B_{I,I} \cap sameset$, and that (2) and (3) of \lref{rohlin} already works for $B_{I,I}$.

\begin{co}\label{co:skip}We just established $B_{I,I+1}$ by appropriately breaking up big intervals in $changeset$ into smaller intervals of size $n_I$ and $n_I+1$ appropriately. Now we are going to appropriately break those subintervals into subintervals of size $n_{I-1}$ and $n_{I-1}+1$ and those into subintervals of size $n_{I-2}$ and $n_{I-2}+1$ etc. Readers may feel at this point that the rest of the proof is clear and that they can skip the rest of the proof of this lemma. In that case, skip to the next bold face sentence.
\end{co}  
Construction of $B_{I-1,I+1}$:
\vskip .5cm
Let $\Theta_1 = \{\omega$: there exists $\omega_1 \in B_{I+1,I+1}$ such that $T^{a(\omega_1)}(\omega_1)=\omega\}$. Let $\Theta_2= changeset \setminus \Theta_1$ so that $changeset = \Theta_1 \dot \cup \Theta_2$

\begin{co}\label{co:JBI1} If $\omega_1 \in B_{I+1,I+1},~ \omega_2 \in B_{I+1,I+1},~ 1\leq k_1\leq n_{I+1}, 1\leq k_2\leq n_{I+1}$, and if $T^{k_1} \omega_1 = T^{k_2} \omega_2$, then $k_1=k_2$ and $\omega_1=\omega_2$ because otherwise if we just have $k_1=k_2$ we contradict injectivity of $T$ and if $k_1 \neq k_2$ then $T^{k_1} B_{I+1,I+1}$ intersects  $T^{k_2} B_{I+1,I+1}$ which means (by injectivity ) that $T^{k_1-1}B_{I+1,I+1}$ intersects $T^{k_2-1}B_{I+1,I+1}$ contradicting (2) of \lref{rohlin} (referring to \eref{si}). This shows uniqueness of \omo in the definition of $\Theta_1$ and avoids ambiguity in the proof of the following claim.
\end{co}

\vskip .5cm

Claim: 
For all $\omega \in B_{I,I+1} \cap \Theta_2$, every $T^i(\omega)$ where $0 \leq i< J(\omega,B_{I,I+1})$ is also in $\Theta_2$.

\begin{proof}

We have to analyze our definitions of $\Theta_2,~ a(\omega)$ and $b(\omega)$. Since $\omega \in \Theta_2,   ~\omega = T^j(\omega_1)$ for $ \omega_1 \in B_{I+1,I+1}$ where $0 \leq j<a(\omega_1)$ or $b(\omega_1) \leq j< J(\omega_1,B_{I+1,I+1})$. Select $i$ with $0 \leq i< J(\omega,B_{I,I+1})$.
 
Case 1: $0 \leq j< a(\omega_1)$:  By time $i, 0 \leq i< J(\omega,B_{I,I+1}),~ T^i(\omega)$ has not yet returned to $B_{I,I+1}$ so since $T^i(\omega)=T^{i+j}(\omega_1)$, by \eref{inducab}, $j+i < a(\omega_1)$ and thus $T^i(\omega) =T^{j+i}(\omega_1) \in \Theta_2$. 

Case 2: $b(\omega_1)\leq j<J(\omega_1,B_{I+1,I+1})$: By time $i, 0 \leq i< J(\omega,B_{I,I+1}), T^i(\omega) =T^{j+i}(\omega_1)$ is not yet in $B_{I,I+1} \supset B_{I+1,I+1}$ so 
$i+j < J(\omega_1,B_{I+1,I+1})$. \end{proof}

We will alter $B_{I-1,I}$ to get $B_{I-1,I+1}$ on $\Theta_1 \cup \Theta_3$ where $\Theta_3=\{T^i(\omega):\omega \in B_{I,I+1} \cap \Theta_2$ and $0 \leq i < J(\omega, B_{I,I+1})\}$. The claim guarantees that this won't effect $sameset$. Actually $\Theta_3 = \Theta_2$ because using a proof similar to the proof of the claim, if we start on a point in $\Theta_2 \setminus B_{I,I+1}$ and read backwards in its orbit until we reach $B_{I,I+1}$ we will not leave $\Theta_2$. Select $\omega \in B_{I,I+1} \cap \Theta_2$. Since $n_{I}>n_{I-1}(n_{I-1}-1)$ select finite set $S(\omega) \subset \{T^i(\omega) : 0 \leq i \leq J(\omega,B_{I,I+1})\}, S(\omega) = \{d_0, d_1, d_2,... d_k\}$ such that $d_0 = \omega$,  either $T^{n_{\raise-2pt\hbox{$\scriptscriptstyle I-1$}}}d_i = d_{i+1}$ or $ T^{n_{\raise-2pt\hbox{$\scriptscriptstyle I-1$}}+1}d_i = d_{i+1}$ for $i<k$. If $T^{ J(\omega,B_{I,I+1})}(\omega) \in \Theta_1$, arrange that $d_k= T^{ J(\omega,B_{I,I+1})}(\omega)$. Otherwise arrange that
either $T^{n_{\raise-2pt\hbox{$\scriptscriptstyle I-1$}}}d_k = T^{ J(\omega,B_{I,I+1})}(\omega)$ or $T^{n_{\raise-2pt\hbox{$\scriptscriptstyle I-1$}}+1}d_k = T^{ J(\omega,B_{I,I+1})}(\omega)$ and then let 

$B_{I-1,I+1} =(B_{I-1,I}\cap sameset) \cup \bigcup\limits_{ \omega \in B_{I,I+1} \cap changeset} S(\omega)$

For induction purposes we note that 
\begin{equation}\label{eg:inductwo} \text{For $\omega_1 \in B_{I+1,I+1}$, both $T^{a(\omega_1)}(\omega_1)$ and $T^{b(\omega_1)}(\omega_1)$ are in $B_{I-1,I+1}$}
\end{equation}
because 
\begin{align}
& T^{a(\omega_1)}(\omega_1)~\text{is the } d_k \text{ where \om is the last element}\notag\\ 
& \text{of the orbit of \omo before }T^{a(\omega_1)}(\omega_1) \text{ which is in } B_{I,I+1}\notag
\end{align}
and
\begin{align}
&T^{b(\omega_1)}(\omega_1) \text{ is the } d_0 \text{ where } \omega = T^{b(\omega_1)}(\omega_1) \text{ which}\notag\\
& \text{works because } T^{b(\omega_1)}(\omega_1) \in B_{I,I+1} \text{ by \eref{inducab}.} \notag
\end{align}

Since both $T^{a(\omega_1}(\omega_1)$ and $T^{b(\omega_1}(\omega_1)$ are in $B_{I,I} \cap B_{I-1,I+1} \subset B_{I-1,I} \cap B_{I-1,I+1}$ from (\eref{BII}, \eref{inductwo} and that $B_{I,I} \subset B_{I-1,I}$ is part of our induction hypotheses), the way $B_{I-1,I+1} \cap changeset$ was just constructed together with the fact that \eref{si} works for $B_{I-1,I}$ implies that \eref{si} works for $B_{I-1,I+1}$.
 \vskip .5cm
Construction of $B_{j,I+1}, j<I-1$:
\vskip .5cm
As before for all $\omega \in B_{j+1,I+1} \cap \Theta_2$, all $T^i(\omega)$ where $0 \leq i< J(\omega,B_{j+1,I+1})$ are also in $\Theta_2$. Proceed as above.\hfill $\Box$
\vskip .5cm
{\bf This is all you need of this section to follow the rest of the paper. You can safetly skip the rest of this section if you wish.}
\vskip .5cm
xxx Extending \lref{rohlin}:
\vskip .5 cm
Before we extend the lemma we must first extend the sublemma. Fix an $n$. Let $N>>n$. Subtract $n+1$ from $N$ and then divide by $n$ so that the integer value of the quotient is $Q=\lfloor (N-n-1)/n \rfloor$ with a remainder of $r$ and you have $N=n+1 + Qn +r=n+1 +(Q-r)n +r(n+1)$ where $0 \leq r < n$. This expresses $N$ as a sum of $``n"$s and $``n+1"$s where the number of $``n"$s in the expression is $Q-r>Q-n$ and the number of $``n+1"$s is $r+1 \leq n$. We have established
\vskip .5 cm
Extended sublemma: Fix an $n, ~~N,$ and $R>0$. If $N$ is so large that $(\lfloor (N-n-1)/n \rfloor -n)/n>R$ (e.g. if $N>Rn^2+n^2 + 2n + 1$) then $N$ can be written as a sum of terms each of which is either $n$ or $n+1$ such that 
\vskip .5cm
(the number of $``n"$s) / (the number of $``n+1"$s) 
\begin{equation}\label{eg:R}
\noindent \text{is at least $R$ but such that there is at least one}  ``n+1".\end{equation} 

It is time for us to pay attention to the proof we just concluded to see exactly when we used the sublemma and what happens when we replace it with the extended sublemma. The reason we use the sublemma over and over in this proof is that we are trying to establish Alpern towers. Adding $n$ means that you are on the base of one of these towers and that you have to wait time $n$ until you reach the base again. Adding $n+1$ means you have to wait until time $n+1$ before reentering the base. In this way we obtain an Alpern tower of height $n$. When you add $n+1$ that means that when you leave the base, the last before reentering it you will be in the error set. To get nested error sets you want to make sure that

\noindent {\bf Every time in the proof that you add a bunch of $``n"$s and $``n+1"$s to get a large number the last term you add is a $``n+1"$.}

That way when considering two successive bases $B_1$ and $B_2$ (where $B_2$ is the smaller one) you can be sure that after leaving $B_2$ the last time before you reenter $B_2$ you are in the error set of $B_1$. This gives nested error sets. The extended sublemma promises that at least one of the terms being added is an $``n+1"$ so we can make sure the last term added is an $``n+1"$. 
The only time you see error at all is when you add $``n+1"$ whereas every time you see either $``n"$ or $``n+1"$ you enter the base once. Thus \vskip.5cm
\noindent(the ratio of the measure of the error to the measure of the base)$~=\#(``n+1"s)/(\text{total number of} ``n"\text{s and} ``n+1"s)$ which by \eref{R} is bounded above by $1/(R+1)$. 
\vskip.5cm
But to apply the extended sublemma we first need the preconditions of it to hold. Just make the necessary changes to make that possible. First, the expression $n_I(n_I-1)$ in the definitions of $a(\omega)$ and $b(\omega)$ (\dref{aombom}) has to be replaced by $R_I n_I^2+n_I^2 + 2n_I+1$ where $1/(R_I+1)$ is the desired upper bound for the ratio of the measure of the error set of the $``I"$th tower to the measure of the base of the $``I"$th tower. To get $b(\omega)>a(\omega)$ you will have to tighten (1) of \coref{rap} accordingly. We summarize all this as follows.

\begin{co}\label{co:dreams} If the heights of the towers are allowed to increase sufficiently rapidly, we can get a sequence of Alpern towers which have nested bases, nested error sets and error sets decreasing as rapidly as you might want even in comparison to the bases but the ratios of the size of the error sets to the size of the bases that you want determines how fast your towers must grow. 
\end{co}   
xxx Proof of \tref{fraction}
\begin{proof}
We let $T$ be the shift to the right and assume the existence of $B$. Randomly select \om in the space, select a huge $N$ and let

\noindent$S= \{T(\omega),T^2(\omega)...T^N(\omega)\}$, and let

\vskip .2cm 
  
$Ev = $``the number of elements of $S$ in the error set is less than $N\mu(B)/5$". 

\vskip .2cm 

\noindent We will derive a contradiction by showing that the probability of $Ev$ approaches 0 as N approaches $\infty$. Assume $Ev$. Let $n$ be the number of elements of $S$ in the error set. Then 

1)      $n< N\mu(B)/5$

Let $H$ be the height of the tower. Since $B$ is the base of the tower it follows that 

2)      $\mu(B) \leq 1/H$

Now list the elements of $S \cap B$ in order as 
$S1=\{\omega_1, \omega_2, \omega_3...\omega_k\}$. This defines $k$. Obviously nothing in $S1$ is in the error set of the tower since $B$ is the base of the tower. There are $k-1$ towers completely in $S$ and perhaps a piece of the tower preceding those towers and a piece of the tower succeeding those towers (This is abuse of notation. When we say that $``$There are $k-1$ towers" we mean $``$you pass through the tower $k-1$ times").
It follows from the fact that each tower has height $H$ that 

3)      $(k-1)(H) +n <N < (k+1)H+n.$ 

First let us choose the number of terms in $S$ before \omo which are not in the error set. Since the height of the tower is $H$ the number of such terms is less than $H$ so we can choose this number to be either $0,1,2,...$or $H-1$

Choice 1: The number of ways we can choose the number of nonerror terms before \omo in $S$ is $H$.

Now we are about to choose $k+1$ numbers $a_1, a_2, ...a_{k+1}$.
Regarding the elements of $S1$ as being a subsequence of the elements of $S$, 
\vskip .5 cm

\noindent $a_1$ is the number of elements of $S$ in the error set listed before \omol.

\vskip 0 cm

\noindent $a_2$ is the number of elements of $S$ in the error set between \omo and 
$\omega_2$.

\vskip 0 cm

\noindent $a_3$ is the number of elements of $S$ in the error set between $\omega_2$ and $\omega_3$.

\vskip .2cm

\noindent .

\vskip .2cm

\noindent .

\vskip .2cm

\noindent .

\vskip .2cm

\noindent $a_k$ is the number of elements of $S$ in the error set between $\omega_ {k-1}$ and $\omega_ {k}$.

\vskip 0cm

\noindent $a_{k+1}$ is the number of elements of $S$ in the error set after $\omega_ {k}$.

\vskip .5cm
These $k+1$ nonnegative integers have to add to $n$. If you add one to each of them you get $k+1$ positive numbers that add to $n +k +1$.
There is a standard trick which shows that the number of ways to pick such a sequence of $n+k+1$ positive integers is ${n+k\choose k}$.
\vskip .5cm
Choice 2: The number of ways to choose the $a_i$ sequence is ${n+k \choose k} = 
{n+k \choose n}$
\vskip .5cm
Once we have made the first two choices, the values of $S1$ as elements of $S$ are determined (e.g. if $k>21$ you know for which value of $j$ we have that $\omega_{21} = T^j(\omega))$. This selects out $k$ explicit terms in $S$ which are in $S1$ and since $S1 \subset B \subset A$ we get $k$ explicit terms in $S$ which are in $A$. For any $k$ such terms the probability that they are all in $A$ is $1/2^k$.
\vskip .5cm

We have established that the probability of $Ev$ is bounded above by 
\vskip .5 cm
$x:= H  {n+k \choose  n}  1/2^k$
\vskip .5 cm
\noindent and all that remains is to show that $x$ is tiny. We will make use of approximations to make our arguments easier and cleaner but we are sure the reader will agree that the inequalities and the speed at which we prove $x$ to go to zero overpowers the errors in these approximations. If the reader thinks we are cutting it close we could have made it blatantly obvious if we had used 10 instead of 6 in the statement of \tref{fraction}  allowing us to use 9 instead of 5 in the definition of $Ev$ and that would have been good enough to show that we have a counterexample.  
For your convenience we will repeat the equations we have established.

\vskip .5 cm
      1)      $n< N\mu(B)/5$
      \vskip 0 cm
      2)  $\mu(B)<1/H$
      \vskip 0 cm
      
      3)  $(k-1)(H) +n <N < (k+1)H+n$.
      \vskip 0 cm

\vskip .5 cm
      
Let $M=N/H$. $\approx$ means approximately equal.

\noindent From (1) and (2)

\noindent 4)      $n<M/5$

\noindent from 3,
 
\noindent 5)      $k \approx (N-n)/H$ which by (4) is essentially $M$

Now we analyze the $H$, the ${n+k \choose n}$ and the $1/2^k$ in the definition of $x$.
\vskip .5 cm
First term= $H$
\vskip .5 cm
Second term = ${n+k \choose n }<{M/5+k \choose M/5}\approx {M  \choose  M/5}<(M)^{M/5}/(M/5)!
 \approx (5 e)^{M/5}$
\vskip .5 cm
Third term = $1/2^k \approx 1/2^M$
\vskip .5 cm
Multiplying all this together gives $H ((5e)/32)^{M/5}$ which goes rapidly to 0 as $M$ goes to $\infty$.
\end{proof}
xxx Proof of \eref{counter}
\begin{proof}  Let \ep be the desired size of your error set. Then
\begin{equation}\label{eg:select}
~\epsilon<1/(n+1)
\end{equation}
Consider the following two words in ``0"s and ``1"s:

\vskip .5  cm

a ``0" followed by $n-1~~~~``1"$s \hskip .2cm which we will refer to as a $n$ block and

\vskip .01 cm

a ``0" followed by $n~~~~``1"$s  \hskip .2cm  which we will refer to as a $n+1$ block. 

We will independently concatenate $n$ blocks and $n+1$ blocks to get an infinite word where each time we select an $n+1$ block with probability $\epsilon n/(1-\epsilon)$ which is less than 1 by \eref{select}. This gives a generic word for an $n+1$ step  mixing Markov process and such processes are know to be Bernoulli. We use that process, let $T$ be the shift of a doubly infinite word on that process and $BB$ be the event that  there is a ``0" at the origin. It is easily seen that $BB$ is the base of an Alpern tower of height $n$ and error set of size \epl. Now select $N>n(n-1)+\lfloor n/2 \rfloor$. Divide $N-\lfloor n/2 \rfloor$ by $n$ to get an integer part $Q$ with a  remainder of $r$ and that enables us to write $N$ as $N=\lfloor n/2 \rfloor +Qn +r$ where $Q \geq n-1$ and $r\leq n-1$

Rewrite that as
\begin{equation}\label{eg:N}
N= \lfloor n/2 \rfloor +r(n+1) +(Q-r)n~\end{equation} 
and note that $Q-r \geq 0$. For any point (i.e. doubly infinite word) \om we say that \om is in the beginning of its $BB$ block if it is in $BB$, i.e. if it has a ``0" at the origin. Points near the middle of a $BB$ block obviously have a ``1" at the origin. Although $n+1$ blocks may be rare it is nonetheless true that any finite sequence of blocks occur with positive probability and hence will eventually occur. In particular the following will eventually occur in the output of a randomly chosen \om. 
You will see
\vskip .5cm
$r$ blocks which are $n+1$ blocks which we will call $D$ blocks
\vskip .5cm
then
\vskip .5cm
$Q-r$ blocks which are $n$ blocks which we will call $E$ blocks 
 \vskip .5cm
then
\vskip .5cm
$r$ blocks which are $n+1$ blocks which we will call $F$ blocks
\vskip .5cm
then 
\vskip .5cm
$Q-r$ blocks which are $n$ blocks which we will call $G$ blocks
\vskip .5cm

Suppose for a contradiction that there is an Alpert tower of size $N$ whose base $CC$ is a subset of $BB$. Let \omo be the translate of \om whose origin is the 1 at the end of the last $E$ block (or the last $D$ block if $Q-R = 0$). Look where \omo is in it's the $N$ tower and read backwards until you get to the bottom of the $N$ tower to get a point $x$ which is in $CC$. Then read forward until you get to the next time that you are in $CC$ at a point $y$. The distance you have to travel in the orbit to get from $x$ to $y$ is either $N$ or $N+1$. Since $x$ and $y$  are in $C \subset BB$, the both have a 0 at the origin. By \eref{N} $x$ is in the beginning of an $D$ or $E$ block (It can't be partway below the first $D$ block or it would have a 1 at the origin). Again by \eref{N} you have to go through exactly $r$ blocks which are $n+1$ blocks to get from $x$ to $y$ and every other block you go through is an $n$ block. But that is impossible because (again by \eref{N}) that would put $y$ near the middle of its $BB$ block causing it to have a 1 at the origin. 
\end{proof}

\section{Proof of \tref{main}}
We assume $T$ to be a non-periodic transformation.

We will first prove \tref{main} in such a way that the future says very little about the past and then do the other extreme; we will modify the proof so that the inverse of the process also obeys the theorem.

\begin{lemma}\label{lem:fg} For any nonperiodic measure preserving transformation $T$ there is a function $f$ from \omb to  $\mathbb{N}$ and a function $g$ from $\mathbb{N}$ to $\mathbb{N}$ such that
\\1)	$|f(T(\omega))- f(\omega)|
\leq 1$ for all \oml. 
\\2)	For all \om and all nonnegative integers $i$, there is a member of 
$\\ \{T(\omega),T^2(\omega) . . . T^{g(i)}(\omega)\}$ where $f$ takes on a value greater than $i$.
\end{lemma}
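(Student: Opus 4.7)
The plan is to apply \lref{rohlin} to get a nested sequence of Alpern towers with bases $B_1\supset B_2\supset\cdots$ of heights $n_i$, and then build $f$ as a kind of ``maximum potential'' based on orbit-distance to the bases $B_i$. First I will choose the $n_i$ so that, in addition to \coref{rap}, the series $\sum_i i/n_i$ converges (for instance $n_i=2^{3^i}$ works). The point of the extra summability is to force $f$ to be finite almost everywhere; \coref{rap} alone is not enough.

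For each $i\geq 1$ I will then define the orbit distance
\[
 d_i(\omega)\;=\;\min\bigl\{k\geq 0:\;T^k\omega\in B_i\text{ or }T^{-k}\omega\in B_i\bigr\},
\]
set $f(\omega)=1+\max_{i\geq 1}(i-d_i(\omega))$, and take $g(i)=n_i+1$. Because the $B_i$-Alpern tower of height $n_i$ covers all of $\Omega$, we have $d_i(\omega)\leq n_i$ everywhere. A direct case check shows $|d_i(T\omega)-d_i(\omega)|\leq 1$: the orbit of $T\omega$ is the orbit of $\omega$ shifted by one step, so the nearest $B_i$-visit along the orbit moves by at most one.

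Properties (1) and (2) then follow quickly. For (1), each function $i\mapsto i-d_i$ is $1$-Lipschitz along orbits, and a pointwise max of $1$-Lipschitz functions is again $1$-Lipschitz: if $j^*$ realizes the max at $T\omega$, then $f(T\omega)-f(\omega)\leq (j^*-d_{j^*}(T\omega))-(j^*-d_{j^*}(\omega))\leq 1$, with the opposite inequality symmetric. For (2), the Alpern-tower structure guarantees that from any $\omega$ the forward orbit hits $B_i$ within $n_i+1$ steps; at any point of $B_i$ we have $d_i=0$, hence $f\geq 1+i>i$.

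The one place the extra summability is needed---and the main obstacle to the argument---is in showing that $f(\omega)<\infty$ almost everywhere. For this I will apply Borel--Cantelli to the sets $\{d_i\leq i\}\subseteq \bigcup_{|k|\leq i}T^kB_i$, whose measure is at most $(2i+1)\mu(B_i)\leq (2i+1)/n_i$. Summability yields $d_i(\omega)>i$ for all but finitely many $i$ on a set of full measure, so the max in the definition of $f$ is attained at a finite index and $f$ is a positive integer; on the remaining null set one simply sets $f\equiv 1$ so that $f:\Omega\to\mathbb{N}$ is defined everywhere and both (1) and (2) continue to hold.
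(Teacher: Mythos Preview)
Your construction is essentially the paper's---take $f$ as the pointwise maximum of tent-shaped functions attached to a sequence of Alpern towers and control finiteness via Borel--Cantelli---with the cosmetic differences that your tent $i-d_i$ is two-sided (so $g(i)=n_i+1$ already suffices, versus the paper's one-sided tent and $g(i)=n_{i+1}+1$) and that you invoke nested bases via \lref{rohlin}, which the paper notes are unnecessary for this particular lemma.

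There is one small gap: the claim that ``$f$ is a positive integer'' fails on most of $\Omega$. Your Borel--Cantelli argument shows $d_i(\omega)>i$ for all but finitely many $i$, but it does not show $d_i(\omega)\leq i$ for \emph{at least one} $i$; on the set $\bigcap_i\{d_i>i\}$, which has measure at least $1-\sum_i(2i+1)/n_i>0$, every term satisfies $i-d_i(\omega)\leq -1$ and hence $f(\omega)\leq 0$. The repair is immediate: replace $f$ by $\max(f,0)$. This preserves property (1) (the maximum of $1$-Lipschitz functions with a constant is still $1$-Lipschitz) and leaves property (2) untouched, since (2) only concerns large values of $f$.
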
 
\begin{defn} \label{defn:fg} After proving this lemma $f$ and $g$ will henceforth be the functions above given by this lemma except in sections 4 and 6. Sections 4 and 6 are the sections where we discuss uncountable partitions, \tref{main} is not relevant, and $f$ will have a different meaning.
\end{defn} 

\begin{proof}  
 For the purposes of this lemma we don't need nested Alpern towers. We only need a sequence of Alpern towers where the height of the $i^\text{th}$ tower is $n_i$ such that 
\begin{equation}\label{eg:3i} n_i>3i~ \text{for all}~ i.
\end{equation}
\begin{equation}\label{eg:1ni} \sum\limits_1^\infty (i/n_i)<\infty.
\end{equation}

\noindent Fix $i$  and \oml. Define $j$ to be the number such that \om is in the $j^\text{th}$ rung of the $i^\text{th}$ tower (let it be $\infty$ on the error set). Define a function $f_i$ by 
\begin{equation}\label{eg:fi} 
f_i(\omega)= \begin{cases}              
i ~~~~~~~~~~~~\text{ if } 1 \leq j \leq i\\
2i-j~~~~~~ \text{if } i+1\leq j \leq 2i \\ 0~~~~~~~~~~~\text{ otherwise}\end{cases} 
\end{equation}
and it is clear that $f_i$ obeys (1) of \lref{fg}. 

Each rung of the $i^\text{th}$ tower has size at most $1/n_i$ so the support of $f_i$ has measure at most $2i/n_i$. By \eref{1ni}, Borel Cantelli says that (after removing a set of measure 0) every \om  is in only finitely many such supports. That means the following definition makes sense. Let $f$ be the maximum of all $f_i; i$ going from $1$ to $\infty$. It is easy to verify that the maximum of a bunch of functions obeying (1) of \lref{fg} obeys (1) of \lref{fg} so $f$ obeys (1) of \lref{fg}. Let $g(i)= n_{i+1} +1$ . Since the $i+1^\text{th}$ tower is an Alpern tower of size $n_{i+1}$ any stretch of orbit of size $g(i)$ eventually hits the $i+1^\text{th}$ rung of the $i+1$ tower and hence in such a stretch there is a point \om where $f_{i+1}(\omega) = i+1$ and hence $f(\omega)\geq i+1$.
\end{proof}

\vskip .5 cm
Completion of proof of \tref{main}:
\vskip .5 cm
Now let $P_i$ be an increasing sequence of finite partitions which separates points and label each piece of each $P_i$ with a distinct positive integer i.e. for any $i,j$,  a piece of $P_i$ and a piece of $P_j$ are labeled with the same integer only when $i=j$ and they are the same piece. Let $G$ be an injection from finite sequences of integers to positive integers such that 
\begin{equation}\label{eg:Ggeqnone} G(n_1,n_2,n_3...n_k)> n_1 ~\text{for all finite sequences} ~n_1,n_2...n_k.
\end{equation} 

We now proceed to define the positive integer valued function $h$ discussed in the statement of \tref{main}. For any \om where $f(\omega)=0$, define $h(\omega)$ to be $g(1)$. Now fix $i>0$ and suppose $h$ is defined for every \om for which $f(\omega) < i$. We now suppose $f(\omega)=i$ and describe how we define $h(\omega)$. Select the least positive $m$ such that $f(T^m(\omega))\geq i$ (the existence of such an $m$ is guaranteed by (2) of \lref{fg}). Then $h(T(\omega)), h(T^2(\omega)), ... h(T^{m-1}( \omega))$ are already defined by induction. Now let $a_0, a_1, a_2, ... a_m$ be the integers labeling the pieces of $P_i$ containing $\omega, T(\omega), ... T^m(\omega)$ respectively. We define $h(\omega)$ to be 
\vskip .2 cm
$\\ h(\omega) := G(g(i+1), a_0, a_1, a_2, ... a_m, -1, h(T(\omega),h(T^2(\omega), ... h(T^{m-1} (\omega))$
\vskip .5cm
Note that since $-1$ is the only negative term in the definition of $h$, it serves as a comma separating the terms $a_j$ from the terms $h(\omega_i)$. In other words, since $G$ is an injection 

\begin{align} 
& \text{$h(\omega)$ tells you all the $a_i, 1 \leq i \leq m-1$, and all the} ~h(T^i(\omega)), 1\leq i\leq m-1,\notag \\
& \text{and the $-1$ lets you know which is which. (Recall that $i:=f(\omega)$ and $m$ is}\notag \\
& \text{the least positive integer such that $f(T^m(\omega)\geq{i}$)}\label{eg:htells}
\end{align} 

Actually in this case the -1 is pointless since you already know $m$ to be half the number of terms (including the -1) but we are including it so that we can generalize later.

Now the map 
$\\ \omega  \rightarrow ... h(T^{-2}(\omega),h(T^{-1}(\omega),h(\omega), h(T(\omega)), h(T^{2}(\omega), ... $
is a homomorphism from $T$ to a stationary process which is an isomorphism if it separates points (in which case the countable partition defined by ``$\omega_1$ is in the same piece of the partition as $\omega_2$ iff $h(\omega_1)=h(\omega_2)$" is called a generator of $T$). Thus the proof of \tref{main} will be complete when we show \eref{separate} and \eref{npastdet} below:

\begin{align}
&\text{The above stationary process separates points, i.e. for any two distinct } \notag\\
&\text{points \al and \be in \ombl, there is a $j$ such that}~ h(T^j(\alpha)) \neq h(T^j(\beta)).\label{eg:separate}\\
&\ \notag \\
&\text{If}~ h(T^{-1} (\omega))=r,~ \text{then}~ h(T^{-r}(\omega)), h(T^{1-r}(\omega)),... h(T^{-1}(\omega))~   
\text{determines}~ h(\omega). \label{eg:npastdet}
\end{align}

Proof of \eref{separate}: 
Let \omo and \omt be two distinct elements of \ombl. Then there exist a $j$ such that $P_j$ separates them and by (2) of \lref{fg}, there is a positive $k$ such that 
$f(T^{-k}(\omega_1))>j$. By selecting the smallest such $k$ and letting $f(T^{-k}(\omega_1)) =:ii>j$,  by \eref{htells}, $h(T^{-k}(\omega_1))$ encodes which piece of  $P_{ii}$ and hence which piece of $P_j$ (because the partitions are increasing) \omo is in and thus is different from $h(T^{-k}(\omega_2))$.

Proof of \eref{npastdet}: 
Let $\omega_1= T^{-1}(\omega)$ so that $h(\omega_1)=r$. Let $i:=f(\omega_1)$. By \eref{Ggeqnone}, $r=h(\omega_1)>g(i+1)$ so by (2) of \lref{fg} there is an element of  
$\{(T^{-r}(\omega), T^{1-r}(\omega),... T^{-1}(\omega))\}$
where $f$ takes on a value of more than $i+1$. Let $\omega_2 := T^{-k}(\omega)$ where $k$ is the minimum positive number with $f(T^{-k}(\omega))>i+1$ so if we let $f(T^{-k}(\omega))=:i_0$, then $i_0>i+1$. Then $-k\geq -r$. We will be done if we can show that 

\begin{equation}\label{eg:hom2dethom} h(\omega_2) ~\text{determines}~ h(\omega).
\end{equation} 

Now let $s$ be the least nonnegative number such that  $f(T^s)(\omega)\geq i_0$ which is greater than $i+1$. Since $i = f(T^{-1}(\omega))$, by (1) of \lref{fg}, $s > 0$ so \eref{hom2dethom} follows by \eref{htells} applied to $\omega_2$ where we replace $i$ with $i_0$. \hfill $\Box$

\begin{co}\label{co:more}
We actually proved more than we said we would. All we said we would prove is that $h(T^{-r}\omega), h(T^{1-r} \omega),... h(T^{-1} \omega)$ determines $h(\omega)$. We actually proved that one of those values alone encodes enough information to determine $h(\omega)$. \end{co}

\begin{co}\label{co:assumecg}
By proving the above theorem we have among other things reproved the countable generator theorem. We just defined $h$ as
$\\h(\omega):=G(g(i+1), a_0, a_1, a_2, ... a_m, -1, h(T(\omega), h(T^2(\omega), ... h(T^{m-1} (\omega))$
where $a_0, a_1, a_2, ... a_m$ are the integers labeling the pieces of $P_i$ containing 
$\\\omega,T(\omega), ... T^m(\omega)$ respectively.

 However, suppose we already have a finite or countable generator $P$ before starting this proof and labled the pieces of $P$ as integers. We can simply write 
\\$h(\omega):= G(g(i+1), a, h(T(\omega), h(T^2(\omega), ... h(T^{m-1} (\omega)) $
where $a$ is the element of $P$ containing \om if we also insist that 
$f(\omega)=0 \Rightarrow h(\omega)= G(g(1),a))$ 

\noindent because this defines $h$ to be finer than $P$ which we already know to generate, so $h$ generates the \sal and hence we don't need the $a_1,a_2,...a_{n-1}$.
\end{co}
\begin{co}\label{co:hientropy}
Now suppose $h$ has been defined as in \coref{assumecg}. The purpose of the following examples and analysis are to show that although our theorem says that the past determines the future in a very strong finitistic way, when you read the other way and look at how the future effects the past, the conditional measure on the present given the future has almost all the full entropy of the original process. The intuitive idea of this is that if you look at the future of the $h$ process, the only thing it encodes is the future of the $P$ process together with a knowledge of the future $f$ process and future $g$ process. The future $f$ and $g$ processes are determined by where you will be in the Rokhlin towers. Thus the only information you have about the past of the $P$ process given the future of the $h$ process, that you would not know just by looking at the future of the $P$ process, is where you will be in the Rokhlin towers, and if these towers are built with tiny and rapidly decreasing bases that is not very much information. {\bf Until we say otherwise, $h$ is defined as in \coref{assumecg}, not as in the proof of the theorem,} but of course the theorem still holds.
\end{co}

Example 1: Start off with the standard 2 shift with canonical 1/2, 1/2 generator whose pieces will be denoted by $H, T$, to suggest that we are looking at infinitely many independent flips of a fair coin ($H$ meaning heads and $T$ meaning tails.) Now cross that process with another aperiodic process with small entropy (you can even assume that the process we are crossing with has entropy zero.). We consider the proof of \tref{main} for this product process. Let $(a,b)$ be a generator of the small entropy process so that the entire process has generator $((a,H),(b,H),(a,T),(b,T))$ which we can write as (1,2,3,4). Now arrange that all our Rokhlin towers are measurable with respect to the small entropy factor so that $f$ and $g$ are measurable with respect to the small entropy factor. Then define $h$ as in \coref{assumecg} except make sure that $h$ takes on an odd number when \om is in 1 or 2 and an even number when \om is in 3 or 4 (which we can do by controlling our definition of $G$). Now when we look at the future $h$ process (i.e. $h(1),h(2). . .$), the only thing it encodes is the future $(H,T)$ process together with the values of $f$ and $g$ in the future. But the $f$ and $g$ processes are independent of the $(H,T)$ process, and since the future $H,T$ values are independent of the present $H$ or $T$ it follows that even conditioned on the future $h$ process, the present value of $h$ is odd with probability 1/2. This means that as we look backwards in time all the randomness of the $H,T$ process is still there.

Example 2: (generalization of previous example): Now start with an arbitrary $P$ process (we will assume a two set generator (1,2) for simplicity but the reader probably has enough imagination to figure out how we would handle an arbitrary generator.) Do the same as we did in the previous example, crossing with a aperiodic transformation of arbitrarily small entropy (perhaps 0) and arranging for $f$ and $g$ to be measurable with respect to that transformation and $h$ to be odd iff we are on the piece labeled ``1" of $P$. Then if we just look at the oddness and evenness of $h$ we see the original $P$ process and the probability of an odd number in the present given the future $h$ process is the same as the probability of a ``1" given the future $P$ process so that all the randomness of the $P$ process is maintained as we look backwards in time. 

Example 3:  (General case except we use two set generator for simplicity) Do the same as in the previous example again assuming generator (1,2) but this time don't bother crossing it with anything. By choosing your Rokhlin tower to have small and rapidly decreasing bases, we can arrange that a typical $f$ and $g$ name is exponentially big (i.e. that the $f$ and $g$ processes has small entropy.) Again let the evenness or oddness of the $h$ process read off the $P$ process. Given the future $h$ name, all we know is the future $f$ and $g$ names and future $P$ name. The theory of relative entropy (In particular Pinsker's formula) implies that the relative entropy of the $P$ process over the $f$ and $g$ processes is only slightly less than the entropy of the $P$ process. If, for example, the $P$ process has entropy $1/3$, then even when conditioned on the entire future $h$ process, the relative entropy of the $P$ process over the $f$ process is only slightly less than $1/3$, i.e. the expected entropy of the the two set partition 
\vskip .3cm
[the value of $h$ at time 0 is even, the value of $h$ at time 0 is odd] 
\vskip .3cm
\noindent given the future of $h$ is only slightly less than 1/3.
\begin{co}\label{co:opposite}
We now do the opposite extreme. We arrange for \tref{main} to hold in both directions. The above examples show that we can arrange for the past to determine the future in this very deterministic way while the past given the future can be made to have almost the full entropy of the process. We now point out that if we had wanted to, we could instead have had the theorem work in both directions, i.e. not only would an $n$ at time -1 have meant that $h$ at times $-n,...-2,-1$ have determined the $h$ at time 0, but also that an $n$ at time 1 would imply that $h$ at times $1,2,...n$ would determine $h$ at time 0. 
\end{co}
\noindent Accomplishing \coref{opposite}:
\vskip .3cm
 \noindent Simply replace the sentences
\vskip .3cm
\noindent``Select the least positive $m$ such that $f(T^m(\omega))\geq i+1 \hskip.03cm    $ (the existence of such an $m$ is guaranteed by (2) of \lref{fg}). Then $h(T(\omega)), h(T^2(\omega)), ... h(T^{m-1}( \omega))$ are already defined by induction."
\vskip .3cm
\noindent in the proof of the \tref{main} with the sentences
\vskip .3cm

\noindent``Select the least positive $m$ such that $f(T^m(\omega))\geq i+1$ and the least positive $n$ such that 
$f(T^{-n}(\omega))\geq i+1$(the existence of such an $m$ and $n$ are guaranteed by \lref{fg}). Then $h(T^{1-n}( \omega)). . .h(T^{-1}( \omega)),h(T(\omega)), h(T^2(\omega)), � h(T^{m-1}( \omega))$ are already defined by induction."
\vskip .3cm
\noindent and then define $h(\omega)$ to be
\vskip .3cm
\noindent $G((g(i+1)+1, a_0, a_1, a_2,. . . a_m, -1 , 
\\ h(T^{1-n}( \omega)). . .h(T^{-1}( \omega)),h(T(\omega)), h(T^2(\omega)), . . . h(T^{m-1}( \omega)))$
\vskip .3cm
\noindent and then you can carry out the proof in both directions. There is no reason to add more $a_i$s because their only purpose is to assure that $h$ is a generator.

\section{Uncountable Partitions}

\begin{co}\label{co:myuncount}
We now consider processes on an uncountable alphabet. The following is a known trick (this is \cite{Rokhlin} already mentioned in the introduction). Suppose we have a process with a generator $P$ (finitely or countably infinite) and we want to get an uncountable generator which behaves perversely. Just let your partition be $P^N$ and the piece of the partition that \om is in is $(p_0,p_1,p_2,...)$ where $p_i$ is the piece of $P$ containing $T^i(\omega)$. Then the term at time -1 completely determines the term at time 0 but when reading from future to past you get a Markov chain with all the randomness of the original process. However we have found a particularly perverse example of this phenomenon. 
\end{co}

To motivate this example consider a 2 dimensional process in which each lattice point of the plane is endowed with a random variable which takes on 1 with probability 2/3 and 0 with probability 1/3 and suppose these variables are all independent. This completely defines a process. Here is another way to define the same process. Just say that 
\begin{align}
&\text{running the process backwards, the columns form a stationary process} \notag\\
&\text{which is in fact a Markov chain in which the conditional probability} \notag\\
&\text{of the 0 column given the 1 column is always the 1/3, 2/3 i.i.d. measure.} \label{eg:iid}
\end{align}

Hence it is impossible to obtain a process which is defined by \eref{iid} and still have the -1 column determine the 0 column. The following example shows that you can almost do that.

Define a (generally non-stationary) sequence of random variables 

$. . .,X_{-2}, X_{-1}, X_0, X_1, X_2,. . .$

\noindent to be blue if they are independent of each other and each one takes on one of the following two distributions; 0 with probability 1/3 and 1 with probability 2/3, or 1 with probability 1/3 and 0 with probability 2/3. Since a blue process is not in general stationary, the entropy of a blue process is not defined but it is intuitively obvious that a blue process from an entropy standpoint is the same as the 1/3, 2/3 ~i.i.d. process. Hence, given the previous paragraph it is rather surprising that we can define a process on the 2 dimensional lattice points such that the columns form a stationary process, the -1 column determines the 0 column with probability 1, and yet the column process is a backwards Markov chain where the conditional probability of the 0 column given the 1 column is always blue. Although it is surprising that we can do that it is also easy so we suggest that the reader try to do it himself before reading onward.

We define the process by defining the backwards Markov Chain. Let $f$ be any infinite to 1 map from the integers onto the integers. We assume we know the 1 column and define the measure on the 0 column (independent of the 2,3,... columns). We simply define it to be 
\begin{align}
&\text{the blue process which on point $(0,i)$ takes on the} \notag\\
&\text{same value as $(1,f(i))$ with probability 2/3} \notag\\
&\text{and the opposite value with probability 1/3.}\label{eg:markov} 
\end{align}
The -1 column determines the 0 column because if you know the -1 column and you want to determine the value at $(0,i)$ just look at the infinite set of values on terms $(-1,j)$ where $f(j) = i$ and by the strong law of large numbers 2/3 of them will be 1 iff the term at $(0,i)$ is 1. To rigorously define this process we have to define a stationary measure on the columns but the theory for doing that is analogous to the theory for finite state Markov processes and we leave that to appendix. Also analogous is the proof that this is a mixing Markov process and that such processes are Bernoulli (all proved in the appendix).

\section{Uniform Martingales}
This section modifies the proof of \tref{main} to prove \tref{ext} using  \dref{umt}. \dref{um} is trivially satisfied for the process guaranteed by \tref{main} because for any integer $a$ there is a fixed time past which you will know precisely whether or not the integer at time 0 is $a$ given that much past (actually the theorem does not say that but if you look at the proof you will see that not only is it the case that if there is a 5 at time -1 then the process at times -5,-4,-3,-2,-1 determines the value at time 0, but furthermore if there is a 5 at time 0 then the process at time -5,-4,-3,-2,-1 determines that there is a 5 at time 0.) However \dref{umt} fails for this process because given the entire past, the measure at time 0 puts all its mass at one number and if that number is huge it will take a long time before you can even suspect what it is. It is \dref{umt} which is of interest here.

It was shown that a specific finite state process called the $T, T^{-1}$ process could be extended to a finite state uniform martingale in \cite{kal}, and then it was shown that every zero entropy process could be extended to a finite state uniform martingale in \cite{wei}. In both cases essentially the same technique was used. We will use that technique again in this paper to show that by modifying  \tref{main} we can establish that every aperiodic stationary process can be extended to a uniform martingale on a countable state space.

The basic method used in those two papers and in this one for constructing a uniform martingale is as follows. First we construct two jointly distributed processes, one an i.i.d. process $. . .,a_{-2}, a_{-1}, a_0, a_1, a_2,. . .$ of positive integers called the lookback process. The other an arbitrary process,
 
\noindent $. . .,b_{-2}, b_{-1}, b_0, b_1, b_2,. . .$ on a finite or countably infinite alphabet and we arrange that these processes are jointly distributed so that

\begin{align} 
&\text{ The $a_i$ process is an i.i.d. process}\label{eg:lbiid} \\
&\text{	Each $a_i$  is independent of all the following random variables jointly:}\label{eg:lbind}\\
&\text{ all $a_j, j<i$ and all $b_j, j<i.$}\notag\\
&\text{	Each $b_i$ is determined by $a_i, b_{i-1}, b_{i-2}, . . .  b_{i-a_i}.$}\label{eg:lbdet}
\end{align}

\begin{co}\label{co:silly} It is obvious that \eref{lbind} immediately implies \eref{lbiid} but we will often say, ``Suppose we have \eref{lbiid}, \eref{lbind} and \eref{lbdet}" because it is useful to get the reader to keep \eref{lbiid} in mind.
\end{co}

\begin{lemma}\label{lem:lbsgiveum} If we can establish $. . .,a_{-2}, a_{-1}, a_0, a_1, a_2,. . .$ 
\\ and $  . . .,b_{-2}, b_{-1}, b_0, b_1, b_2,. . .$ processes obeying \eref{lbiid},\eref{lbind},  
\\ and \eref{lbdet}, then $ . . .,b_{-2}, b_{-1}, b_0, b_1, b_2,. . . $ is a uniform martingale.\end{lemma}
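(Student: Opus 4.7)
The plan is to condition on the hidden lookback variable $a_0$ and exploit the fact that, by \eref{lbind}, $a_0$ is independent of the entire past $b_{-1},b_{-2},\ldots$, and by \eref{lbdet}, once $a_0=k$ is known the value $b_0$ is a deterministic function of $b_{-1},\ldots,b_{-k}$. I would introduce notation: let $\mu_n(\cdot\mid b_{-1},\ldots,b_{-n})$ be the conditional distribution of $b_0$ given the $n$-past, and $\mu_\infty(\cdot\mid b_{-1},b_{-2},\ldots)$ the conditional distribution given the full past. The goal is to show $\|\mu_n-\mu_\infty\|_{TV}\to 0$ uniformly over all pasts.

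First I would use the tower property to rewrite both measures by conditioning further on $a_0$. By \eref{lbind}, for every $k$ we have $P(a_0=k\mid b_{-1},b_{-2},\ldots)=P(a_0=k\mid b_{-1},\ldots,b_{-n})=P(a_0=k)$, so $a_0$'s distribution is the same unconditionally, given the $n$-past, and given the full past. By \eref{lbdet}, for $k\leq n$ the conditional law of $b_0$ given $a_0=k$ and either the $n$-past or the full past is the same point mass $\delta_{f_k(b_{-1},\ldots,b_{-k})}$, where $f_k$ is the function asserted by \eref{lbdet}. Therefore both $\mu_n$ and $\mu_\infty$ agree on the ``core'' of mass $P(a_0\leq n)$:
\[
\mu_\infty=\sum_{k\leq n} P(a_0{=}k)\,\delta_{f_k(b_{-1},\ldots,b_{-k})}+\sum_{k>n}P(a_0{=}k)\,\delta_{f_k(b_{-1},\ldots,b_{-k})},
\]
\[
\mu_n=\sum_{k\leq n}P(a_0{=}k)\,\delta_{f_k(b_{-1},\ldots,b_{-k})}+\sum_{k>n}P(a_0{=}k)\,Q_{n,k},
\]
where $Q_{n,k}$ is the conditional law of $b_0$ given $a_0=k$ and only the truncated past $b_{-1},\ldots,b_{-n}$.

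Subtracting, the first $n$ terms cancel, and the triangle inequality gives
\[
\|\mu_n-\mu_\infty\|_{TV}\leq \sum_{k>n} P(a_0=k)\,\|\delta_{f_k(\cdots)}-Q_{n,k}\|_{TV}\leq 2\,P(a_0>n).
\]
Since $a_0$ has a fixed marginal distribution (it is i.i.d.\ by \eref{lbiid}), the quantity $P(a_0>n)$ does not depend on the past at all and tends to $0$ as $n\to\infty$. This is the uniform convergence of $\mu_n$ to $\mu_\infty$ in variation required by \dref{umt}, so the $b$-process is a uniform martingale.

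The only delicate step is the independence assertion $P(a_0=k\mid b_{-1},b_{-2},\ldots)=P(a_0=k)$: \eref{lbind} gives that $a_0$ is jointly independent of all $b_j$ with $j<0$, which does imply independence from the $\sigma$-algebra generated by the whole past since that $\sigma$-algebra is the increasing union of the finite-past $\sigma$-algebras. I would spell this out briefly using the standard fact that independence from each finite subfamily extends to independence from the generated $\sigma$-algebra. Everything else is a straightforward decomposition and a variation-distance bound, and no further hypothesis on the alphabet (finite or countable) is used.
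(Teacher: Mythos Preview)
Your proof is correct and rests on the same key observation as the paper's: by \eref{lbind} the law of $a_0$ is unchanged whether one conditions on the $n$-past or the full past, and by \eref{lbdet} on the event $\{a_0\le n\}$ the value $b_0$ depends only on $b_{-1},\ldots,b_{-n}$, so the two conditional laws agree on an event of probability $P(a_0\le n)$. The paper packages this as a coupling---it extends the fixed $n$-past to two full pasts, uses a common $a_0$ for both, and notes the two resulting $b_0$'s coincide when $a_0\le n$---whereas you write out the mixture decomposition of $\mu_n$ and $\mu_\infty$ directly and cancel the common terms; your route is a bit more transparent, but the substance is identical and both yield the same bound $\|\mu_n-\mu_\infty\|_{TV}\le P(a_0>n)$ uniformly over pasts.
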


\begin{proof}

Let \mut be the measure on $b_0$ given the $n$ past (the $n$ past  $:=b_{-1}, b_{-2}... b_{-n}$) and let 
\nut be the measure on $b_0$ given the entire past (the entire past $:=b_{-1}, b_{-2}...$) . To establish the lemma we must couple \mut and \nut so that they usually agree where exactly how $``$usually" depends only on $n$ (Coupling two measures means establishing a measure on the product space of the two spaces with the two measures as marginals. Joining is a special case of coupling). We are now going to couple \mut and \nutl. 

\begin{align} 
&\text{1) Fix $b_{-1},b_{-2}. . .b_{- n}$ in accordance with the measure on the $n$ past}\notag \\ 
& \text{2) Select $b_{-n-1},b_{-n-2},. . . $ in accordance with the measure on the} \notag \\
& \text{entire past from $-n-1$ onwards given that the $n$ past is }\notag\\ 
& b_{-1},b_{-2}. . .b_{- n}.\notag\\
& \text{3)	Use the same measure as was used in (2) to select $B_{-n-1},B_{-n-2}...$}\notag\\
& \text{and chose them to be independent of $b_{-n-1},b_{-n-2},. . .$ conditioned on} \notag\\
& b_{-1},b_{-2}. . .b_{-n}\notag\\
& \text{4)	Now we have two complete $b$ pasts}\notag\\
& \notag\\
&b_{-1},b_{-2}. . .b_{- n}, b_{-n-1},b_{-n-2},. . .\notag\\
&b_{-1},b_{-2}. . .b_{- n}, B_{-n-1},B_{-n-1}. . .\notag\\
&\notag\\
& \text{Select $a_{-1},a_{-2}. . .$  so that its joint distribution with} \notag\\
&  b_{-1},b_{-2}. . .b_{- n}, b_{-n-1},b_{-n-2},. . . \notag\\
& \text{obeys \eref{lbiid},\eref{lbind} and \eref{lbdet} and then select}\notag\\
& A_{-1}, A_{-2},. . . ~\text{so that its joint distribution with} \notag\\
& b_{-1},b_{-2}. . .b_{- n}, B_{-n-1},B_{-n-1}. . .\notag\\
& \text{is the same as the joint distribution of}~ a_{-1},a_{-2}. . . \notag\\
& \text{and}~ b_{-1},b_{-2}. . .b_{- n}, b_{-n-1},b_{-n-2},. . .\notag\\
& \text{5) Now we select $A_0$ and $a_0$. First choose $a_0$ independent of everything}\notag\\
& \text{chosen so far. Note that all we need to know about $A_{0}$ is that it is}\notag\\
& \text{independent of all other $A_{- i}$, of$~b_{-1},b_{-2}...,b_{-n}$, and of all $~B_{-i}$}  \notag\\
& \text{so we can choose $A_{0}$ to equal $a_0$.} \notag
\end{align}

Now focus on the second process (the one which uses $``B"$s) and consider the measure \mut on $b_0$ conditioned on only $b_{-1}, b_{-2}, ...b_{-n}$ (in other words integrate out over all possible continuations $B_{-n-1},B_{-n-2}...$) and then on the first process (the all $``b"$ process) and consider the measure \nut on $b_0$ conditioned on the entire past. If $a_0<n$ the two values of $b_0$ are identical (by (5) and \eref{lbdet}). We have established a coupling of \mut and \nut which agree unless $a_{0}> n$  and thus the lemma is proved.
\end{proof}

\begin{corollary}\label{cor:umpair}[of the proof of \lref{lbsgiveum}] If we can establish 
$\\. . .,a_{-2}, a_{-1}, a_0, a_1, a_2,. . .$ and  $. . .,b_{-2}, b_{-1}, b_0, b_1, b_2,. . .$
 processes obeying \eref{lbiid},\eref{lbind} and \eref{lbdet}, 
 then $ \\. . .(a_{-2},b_{-2}), (a_{-1},b_{-1}), (a_0,b_0), (a_1,b_1), (a_2,b_2) . . .$ is a uniform martingale.
\end{corollary}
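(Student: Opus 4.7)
The plan is to re-use the coupling constructed in the proof of \lref{lbsgiveum} essentially verbatim, verifying only that it also couples the $a$-coordinate without additional effort. What must be shown is that the conditional distribution of $(a_0,b_0)$ given the $n$-past $(a_{-1},b_{-1}),\ldots,(a_{-n},b_{-n})$ is close in variation to the conditional distribution of $(a_0,b_0)$ given the entire past of the joint process, with the closeness controlled by a function of $n$ alone.

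First I would fix an $n$-past $(a_{-1},b_{-1}),\ldots,(a_{-n},b_{-n})$ and extend it in two mutually independent ways to full pasts of the joint process, using the conditional distribution of the tail given this $n$-past (which is well-defined because \eref{lbiid}, \eref{lbind} and \eref{lbdet} pin down the joint law). Call the two extensions $(a_{-i},b_{-i})_{i>n}$ and $(A_{-i},B_{-i})_{i>n}$; note that both agree with the fixed $n$-past in positions $-1,\ldots,-n$. By \eref{lbind}, $a_0$ is independent of the past of the joint process and has the common i.i.d.\ marginal, and the same is true of $A_0$, so we may couple them by setting $A_0:=a_0$. By \eref{lbdet}, $b_0$ is determined by $a_0,b_{-1},\ldots,b_{-a_0}$ and $B_0$ by $A_0,B_{-1},\ldots,B_{-A_0}$; since $a_0=A_0$ and the two pasts agree in positions $-1,\ldots,-n$, we conclude $b_0=B_0$ whenever $a_0\leq n$.

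The coupled pairs $(a_0,b_0)$ and $(A_0,B_0)$ therefore disagree only on the event $\{a_0>n\}$, whose probability is a function of $n$ alone, namely the tail of the i.i.d.\ marginal of the lookback process, and tends to $0$ as $n\to\infty$ because $a_0$ is a positive-integer-valued random variable. By \dref{umt}, this proves the joint process is a uniform martingale. There is no genuine obstacle here; every moving part was already in place in the proof of \lref{lbsgiveum}, and the only addition is the observation that \eref{lbind} allows us to couple $A_0$ to $a_0$ without disturbing any joint law, so that matching the $a$-coordinate is free.
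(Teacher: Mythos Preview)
Your proposal is correct and follows essentially the same approach as the paper. The paper presents this result as a corollary of the proof of \lref{lbsgiveum} with no separate argument; you have correctly identified the only adaptation needed, namely that one now conditions on the joint $(a,b)$-past rather than the $b$-past alone, and that \eref{lbind} makes the coupling $A_0:=a_0$ free so that the pair $(a_0,b_0)$ is matched whenever $a_0\le n$, with the same uniform bound $P(a_0>n)$ as before.
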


\begin{co}\label{co:drop}
The reason this corollary was never mentioned in \cite{kal} or \cite{wei} was that we did not want to keep the $a_i$ because that would cause our alphabet to be countably infinite and we wanted it to be finite but in this paper it will turn out that the $. . .,b_{-2}, b_{-1}, b_0, b_1, b_2,. . .$
process already has an infinite alphabet so it does not make any difference whether we keep the  $. . .,a_{-2}, a_{-1}, a_0, a_1, a_2,. . .$ or drop them. Our only reason for pointing out that you can drop them in this paper is that it may help the reader in future research for solving some of the currently open problems that we mention in this paper. 
\end{co}
{\it Preparation for the proof of \tref{ext} a:}

Here is how we will proceed. This is exactly the same procedure that was used in \cite{kal} and \cite{wei}. We will start with our arbitrary non-periodic transformation.

Step 1: We need to alter \tref{main}. We will then endow our non-periodic process with a countable generator using the altered form of \tref{main}. This gives us a stationary process $ . . .,c_{-2}, c_{-1}, c_0, c_1, c_2,. . .$
on a countable alphabet which is isomorphic to the given transformation. This step uses Alpern towers with nested bases.

\vskip .5cm

Step 2: We will then cross that process with a lookback process  
\\$. . .,a_{-2}, a_{-1}, a_0, a_1, a_2,. . .$ 

\vskip .5cm

Step 3: We will then change some of the values $c_i$ to a new letter called $``$question mark" notated by the symbol $``?"$. This is done in infinitely many stages. After making these changes $. . .,c_{-2}, c_{-1}, c_0, c_1, c_2,. . .$ will now become another process $. . .,b_{-2}, b_{-1}, b_0, b_1, b_2,. . .$
, i.e. each $b_i$ is either $c_i$ or ``?". 

\vskip .5cm

Step 4: We will establish \eref{lbiid},\eref{lbind}, and \eref{lbdet} for $. . .,a_{-2}, a_{-1}, a_0, a_1, a_2,. . .$ and $ . . .,b_{-2}, b_{-1}, b_0, b_1, b_2,. . .$
 thereby establishing that $ . . .,b_{-2}, b_{-1}, b_0, b_1, b_2,. . .$is a uniform martingale by \lref{lbsgiveum}. 

\vskip .5cm

Step 5: We will show that  $. . .,b_{-2}, b_{-1}, b_0, b_1, b_2,. . .$
is an extension of 
\\$. . .,c_{-2}, c_{-1}, c_0, c_1, c_2,. . .$, i.e. that from a given $b$ name we can recover the $c$ name, i.e. that by looking at the entire $b$ name we can determine what the question marks were before they were converted to question marks. We will then be able to conclude that $. . .,c_{-2}, c_{-1}, c_0, c_1, c_2,. . .$
can be extended to a uniform martingale on a countable alphabet.

\vskip .5cm

In this paper, as is the case in every case where this procedure has been used, in order to establish step 5 it is necessary that the past not only determine the present but that furthermore a random sampling of the past is sufficient to determine the present. This means that it is necessary when establishing step 1 for us to get the past to determine the present with substantial redundancy (i.e. we need to arrange that many pieces of past determine the present). \tref{main} does not provide sufficient redundancy so we will have to modify the proof of it to get that redundancy. 

\vskip .5cm
Proof of \tref{ext} a:
\vskip .5cm

{\it Carrying out step 1}: For those who read \cite{kal} or \cite{wei} this is really the only step you should have to read. Steps 2 through 5 are just repeating the procedure in those papers. 

The whole goal of \tref{main} is to arrange that if $h(T^{-1} (\omega))=k$ then if you look back $k$ steps you will be able to determine $h(\omega)$. Here we are no longer interested in looking at $T^{-1}(\omega)$ but rather on \om itself so (1) of \lref{fg} is no longer of interest to us. On the other hand (2) of \lref{fg} says that 
for all \om and all nonnegative integers $i$, there is a member of 
$\{T(\omega),T^2(\omega) . . . T^{g(i)}(\omega)\}$ where $f$ takes on a value greater than $i$
but here we would like to replace that with

\begin{align}
& \text{ for all \om and all nonnegative integers $i$, there is}\notag \\
& \text{a member of}~  \{T(\omega),T^2(\omega) . . . T^{g(i)}(\omega)\} \notag\\
&\text{where $f$ takes on} ~i+1\label{eg:newfg}.
\end{align} 

Since we are dropping (1) of \lref{fg} we can simplify the definition of $f$. Just define it to be the largest $i$ such that $\omega \in B_i$ where $B_i$ is the base of the $``i"$th tower letting it be 0 if there is no such $i$ (Again there is a largest one by Borel Cantelli). Now instead of defining $g(\omega)$ to be $n_{i+1}+1$, define it to be $2n_{i+1}+2$. 

$n_{i+1}+1$ would be sufficient if all we wanted was a $k$ where $f(T^k(\omega))> i$ because in any $n_{i+1}+1$ consecutive terms there must be a term where you are in $B_{i+1}$ of \lref{rohlin} but unfortunately you might also be in $B_j$ for $j>i+1$ which would imply that you are in $B_{i+2}$ because in this proof we assume the $B_j$ are nested. But if that happens then in the following $n_{i+1}+1$ terms you cannot conceivably be in $B_{i+2}$ because the height of the $i+2$ Alpern tower is too big. Hence somewhere in those $2n_{i+1}+2$ terms you must be in $B_{i+1} \setminus B_{i+2}$ and for that term $f$ takes on exactly $i+1$.  \eref{newfg} is established. {\it This is the only use of nested bases in this paper}.

In the proof of \tref{main} we set $h(\omega)$ equal to 
\vskip .5cm
$G((g(i+1)+1,\textsl{} a_0, a_1, a_2,. . . a_m, -1 , h(T(\omega), h(T^2(\omega)), ... h(T^{m-1}( \omega)) ))$ 
\vskip .5cm
\noindent where $m$ was chosen to be the least number such that $f(T^m(\omega))\geq i$ and $i$ is chosen so that $f(\omega)=i$. We now alter that definition. In this section we can just let $h(\omega)$  be any fixed positive integer if $f(\omega) = 0$ (if you want us to be precise let $h(\omega)=1$ if $f(\omega)=0)$. Now we let $i:=f(\omega)$, assume $i>0$ and assume $h(\omega')$ has been defined for all $\omega'$ with $f(\omega')<i$. We start with a rapidly increasing function, say $i!!$. 

\begin{defn} \label{defn:Ai}$A_i = i!!g(i+1).$
\end{defn}

\begin{co}\label{co:doublefact} By \eref{newfg} for any $\omega$ and any $A_i$ consecutive integers, $k,k+1...k+A_i-1$ there are at least $i!!$ integers $j$ such that $f(T^j(\omega))=i+2$.
\end{co} 

We now redefine $h$.

\begin{defn}\label{defn:newh} Recall that $f(\omega)=i$. Let $\omega_1, \omega_2,... \omega_r$ be the subsequence of all elements $\omega'$ of 
$\{T(\omega), h(T^2(\omega)), ... h(T^{A_i}( \omega)\}$\text~ such that $f(\omega')< i$ so that $h$ has already been defined. For $1 \leq i \leq  r$ let $Q(i,\omega)$ be the value of $j$ such that $T^j(\omega) =\omega_i$. Let $P_k$ be an increasing sequence of partitions which separates points where the pieces are integers as before. Let $a_j$ be the element of $P_i$ containing $T^j(\omega)$ and define 
\vskip .2cm
$h(\omega) := G(a_0, a_1, a_2,. . . a_{A_i}, -1 , Q(1,\omega),h(\omega_1), Q(2,\omega),h(\omega_2),...Q(r,\omega),h(\omega_r))$
\end{defn} 
$h$ generates for the same reason as before. Recall that in our original definition of $h$, if $f(T^{-1}(\omega))=k$ then in the last $k$ terms there is a term which tells you every $h(T^i(\omega))$ at least until $i$ reaches a $j$ where $f(T^j(\omega))> k+1$  and in particular tells you the value of $h(\omega)$. Now we do not focus on $T^{-1}(\omega)$ but rather on $\omega$ itself. Furthermore we are no longer interested in what you learn when you look back $h(\omega)$ but rather we are looking at what you learn when you look forward $A_i$ where $i:=f(\omega)$. Immediately from our new definition of $h$,

\begin{align}
&\text{if $f(\omega)=i$ then for any $\omega'$ among the terms} \notag\\ 
& T(\omega),T^2(\omega)...T^{A_i}(\omega) \text{ for which } f(\omega)<i, \notag\\
& h(\omega) \text{ determines } h(\omega').\label{eg:forwardid}
\end{align}

\coref{doublefact} and \eref{forwardid} provides the redundancy needed to carry out our construction.

\begin{notation}\label{not:identity} When we say \om knows it's identity we mean that $h(\omega)$ is determined. For $a<b,\hskip .08 cm T^a(\omega)$ tells $T^b(\omega)$ its identity means that $h(T^a(\omega))$ encodes $h(T^b(\omega))$ or we may just say that $a$ tells $b$ its identity when $T$ and \om are understood.
\end{notation}

\vskip.5cm

{\it Carrying out step 2}: Let $c_i = h(T^i\omega)$ which makes $c_i$ a random variable. Since as in \tref{main} $h$ is a generator for the transformation, the shift on the $c_i$ process is isomorphic to $T$. We now create an extension of $T$ by attaching $. . .,c_{-2}, c_{-1}, c_0, c_1, c_2,. . .$ to a lookback process  $. . .,a_{-2}, a_{-1}, a_0, a_1, a_2,. . .$ which we choose to be an i.i.d. process which is completely independent of $. . .,c_{-2}, c_{-1}, c_0, c_1, c_2,. . .$, All that remains is to describe the distribution of $a_0$.  

\begin{align}
& \text{ Distribution of $a_0$:  We let $a_0$ take on the value $A(i)$} \notag \\
& \text{ with probability $1/2^i$ and then the distribution of the}\notag \\
&. . .,a_{-2}, a_{-1}, a_0, a_1, a_2,. . .  \text{ process is completely defined by the}\notag\\
&\text{fact that it is i.i.d.} \label{eg:2toi}
\end{align} 

{\it Carrying out step 3:} 
Take the independent product of the $\\. . .,a_{-2}, a_{-1}, a_0, a_1, a_2,. . . . $ process and the  $. .,c_{-2}, c_{-1}, c_0, c_1, c_2,. . .$
process and write it as $~~...(a_{-2},c_{-2}),(a_{-1},c_{-1}),(a_0,c_0),(a_1,c_1),(a_2,c_2),...
~\\$ and call that the 0 process. 

We now define a sequence of processes starting with the 0 process by successively turning more and more of the $c_i$ to question marks while never altering the $a_i$ process. 
To obtain the 1 process, let $c1_i$ = ``?" if none of $c_{i-1},c_{i-2}....c_{i-a_i}$. 
determines the identity of $c_i$. Otherwise, let $c1_i=c_i$. 

After changing these unidentified $c_i$ to question marks, $. . .,c_{-2}, c_{-1}, c_0, c_1, c_2,. . .$
 turns to $. . .,c1_{-2}, c1_{-1}, c1_0, c1_1, c1_2,. . .$,
 i.e. each $c1_i$ is either $c_i$ or ``?". The 1 process is $~\\...(a_{-2},c1_{-2}),(a_{-1},c1_{-1}),(a_0,c1_0),(a_1,c1_1),(a_2,c1_2),...
~\\$

To obtain the 2 process take any $i$ where $c1_i$ has not yet turned to a ``?" and turn it into ``?" none of $c1_{i-1},c1_{i-2}...,c1_{i-a_i}$ determines its identity to get $. . .,c2_{-2}, c2_{-1}, c2_0, c2_1, c2_2,. . .$. so that the 2 process is $~\\...(a_{-2},c2_{-2}),(a_{-1},c2_{-1}),(a_0,c2_0),(a_1,c2_1),(a_2,c2_2),...
~\\$  

Clearly if $c1_i$ is ``?" so is $c2_i$ because you have less information at the second stage than you do at the first (a ``?" does not determine the identity of anything) but new question marks may occur at the second stage that were not question marks at the first stage because a term $c_k$ which allowed a given $c_i$ to determine its identity may have changed to a question mark at the first stage and hence was not available at the second stage to help $c_i$ determine its identity.

Continue this procedure to obtain the 3 process, 4 process etc. turning more and more terms into question marks. At every stage we know all of the $a_i$. Define $. . .,b_{-2}, b_{-1}, b_0, b_1, b_2,. . .$ by $b_i =~$``?" if $cn_i = ~$ ``?" for any $n$ and $b_i = c_i$ otherwise.  This limiting process, together with the $a_i$ process is to be called the final process. It is not immediately obvious that it is not the case that all $b_i$ are ``?" but we will ultimately see that many of the $b_i$ are $c_i$. This is exactly the procedure used in \cite{kal} and \cite{wei} and just as in those cases the important issue was getting the past to redundantly tell you information about the present. The final process is the product of 
$~\\. . .,a_{-2}, a_{-1}, a_0, a_1, a_2,. . .$ and 
$~\\. . .,b_{-2}, b_{-1}, b_0, b_1, b_2,. . .$ which can also be written as
$~\\...(a_{-2},b_{-2}),(a_{-1},b_{-1}),(a_0,b_0),(a_1,b_1),(a_2,b_2),...
~\\$ 
{\it Carrying out step 4}: We now wish to establish \eref{lbiid}, \eref{lbind} and \eref{lbdet} for the final process. \eref{lbiid} is given. To see \eref{lbind}, i.e. that $a_0$ is not only independent of all previous $a_i$ but also of all previous $b_i$, just note that $a_0$ is independent of all the previous $a_i$ and $c_i$ in the 0 process and that all previous $b_i$ are determined by the previous $a_i$ and the previous $c_i$. We now need only establish \eref{lbdet} which we do by showing that a given $b_k$ is a question mark iff it is impossible to determine its identity by looking back $a_k$ in the $\{b_i\}$ process (that means looking at the $k-1,k-2,...k-a_k$ terms) and that otherwise it is still $c_k$. Your final $b_k$ will be a ``?" iff it became such at a finite stage iff in one of the intermediate processes everyone amongst those $a_k$ terms who determines its identity died (i.e.was a question mark). If not then at the end there is still someone left to tell him his identity.

{\it Carrying out step 5}: All that remains is to show that the $b$ process determines the $c$ process (i.e. that the $b$ process extends the $c$ process, i.e. that by looking at the entire $b$ process you can determine what the question marks were before they turned to question marks.)

We select a fixed $\omega$, say $\omega_0$
and then look at the $a_i, b_i$ and $c_i$ processes for $\omega_0$. It suffices to show that the $b$ process determine $c_0$ because if you can prove that you can determine $c_0$ than you can determine all $c_j$ by just translating the argument by $j$. 

Select $\epsilon > 0$ and after selecting \ep select a fixed huge number $i$ and in particular make sure that $i>f(\omega_0)$. Break the negative numbers $-j$ into the following classes.

\begin{align}
&\text{Class 1:} ~j \leq A_i,\notag\\ 
&\text{Class k, k$>1$:}  ~A_{i+k-2} < j \leq A_{i+k-1}.\notag
\end{align}

\vskip 0.5cm

\noindent By \coref{doublefact} and the fact that $A_{k+1}>2A_k$ for all $k$, it follows that for all k$>$2 there are at least  $(i+k-2)!!$ integers $-j$ in class k such that  $f(T^{-j}(\omega_0)) = i+k$. Since $A_i>A_{i-1}$, in class 1 there are at least $(i-1)!!$ integers $-j$ such that $f(T^{-j}(\omega_0))=i+1$. For all k including k=1, refer to all the $-j$ in class k such that $f(T^{-j}(\omega_0)) = i+k$ as the special class k terms.

Define event k for all $k \geq1$ to be that there is a special class k term $j$ such that $a_{-j} = A_{i+k}$ (recall that for any particular $j$ this has probability $1/2^{i+k}$). Then since, event 1, event 2, . . . have probabilities rapidly approaching 1 as $k$ approaches infinity (because there are so many special class k terms), the probability they all happen rapidly approaches 1 as $i$ approaches infinity. Since $i$ was chosen after \ep we can assume
\begin{equation}\label{eg:allhap} \text{they all happen with probability}~>1-\epsilon 
\end{equation}
 Suppose they all happen.  Let $j_k$ be a specific class $k$ term for which 

\begin{align}
&a_{-j} = A_{i+k}\notag\\ 
&\text{and because it is special}\notag\\
&f(T^{-j_{\hskip.01cm k}}(\omega_0)) = i+k.\notag\\
\end{align}

Since 
\vskip .5 cm
\noindent $j_{k+1}$ is in class k+1, $0<j_k<j_{k+1} \leq A_{i+k-1}<A_{i+k+1}$ and $f(T^{-j_{k+1}}(\omega_0))=i+k+1$ for every $k$, Applying \eref{forwardid}, replacing $T^{-j_{k+1}}(\omega_0)$ for \om, $i+k+1$ for $i$, and $A_{i+k+1}$ for $A_i$, 
\vskip .5 cm
\noindent we get that 
\vskip .5 cm 
\noindent $h(T^{-j_{\hskip 0.01cm k+1}}(\omega_0))$ determines $h(T^{-j_{\hskip .01cm k}}(\omega_0))$.
Now fix k and suppose we are going from the 0 to the 1 process and we want to know if $C1_{-j_k}$ is a question mark. $f(T^{-j_k}\omega_0) = A_{i+k}$ and since $j_{k+1}<A_{i+k-1}$ when $-j_k$ looks back $A_{i+k}$ it  sees $-j_{k+1}$ so $-j_k$ does not turn to a question mark for any $k$. Using this reasoning over and over, $c_{-j_{\raise-2pt\hbox{$\scriptscriptstyle k$}}}$ never turns to a question mark so $b_{-j_{\hskip 0.01cm k}}=c_{-j_{\hskip 0.01cm k}}$ for all $k$. By \eref{forwardid} $-j_1$ gives the identity of 0. Therefore by \eref{allhap} $c_0$ is determined with probability more than 1-\ep and since \ep is arbitraty $c_0$ is determined with probability 1. \hfill $\Box$ 

Proof of \tref{ext} b:
We just finished carrying out step 5 where we showed that from $. . .,c_{-2}, c_{-1}, c_0, c_1, c_2,. . .$ we can recover $ . . .,b_{-2}, b_{-1}, b_0, b_1, b_2,. . .$ and since $. . .,a_{-2}, a_{-1}, a_0, a_1, a_2,. . .$ is the same for both the 0 process and the final process, we can recover the 0  process from the final process. The final process was chosen as a function (in fact a factor) of the 0 process. Hence they are isomorphic. By \cref{umpair} the final process is a uniform martingale. Thus we have proved that if you cross a nonperiondic process with a i.i.d. process which has the probability law,
$(1/2,1/4,1/8...)$ you get a process which is isomorphic to a uniform martingale on a countable state space. The Ornstein isomorphism theorem says that every two independent processes with the same entropy are isomorphic even if that entropy is infinity (entropy 0 for a Bernoulli process is impossible except in a trivial case). Any such nonzero entropy (including infinity) can be achieved as the entropy of a countable partition whose probabilities are a decreasing sequence of positive reals. Hence every nontrivial Bernoulli process is isomorphic to a i.i.d. process on a countable alphabet with probability law $(a,b,c,...)$ where $a,b,c, ...$ is a decreasing sequence of positive real numbers and the only way we used the precise sequence $(1/2,1/4,1/8...)$ is that $i!!$ is much larger than the reciprocal of the $i^\text{th}$ term of that sequence so to handle the general case just carry out the exact same proof except use $(a,b,c,...)$  as your lookback probabilities and  instead of $i!!$ use a function that grows much faster than the reciprocals of that sequence. \hfill $\Box$ 
\section{Appendix} 

Our goal is to establish that the Markov chain of \eref{markov} has a unique stationary measure and that the resulting stationary process is Bernoulli. Keep in mind throughout that the Markov chain runs backwards in time.

\begin{defn} \label{defn:bluetr}
Throughout this section we assume the transition probabilities of the backwards Markov chain of  \eref{markov}. We will call these the blue transition probabilities. 
\end{defn} 

\begin{defn} \label{defn:xij}
Let $\{X(i,j): i, j~ \text{integers}\}$ be the collection of 0,1 valued random variables for which we are trying to find their joint distribution. \end{defn}

By the Caratheodory extension theorem it suffices to establish the joint distribution of {$X(i,j):-n \leq i \leq n,-n \leq j \leq n$} for any $n$ (assuming these joint distributions for different $n$ are consistent with each other.) The way you get existence and uniqueness of a stationary measure for a finite state Markov chain is to prove the renewal theorem and from that it is trivial. Here we do essentially the same thing. We prove an analog for the renewal theorem with essentially the same proof as the proof of the renewal theorem and then existence and uniqueness of a stationary measure follows by essentially the same proof. 

\begin{theorem}\label{thm:ren}
Analog of the Renewal theorem:

Let $\{a(i,j):-n \leq i \leq n,-n \leq j \leq n\}$ be a specific square array of 0s and 1s (i.e. $a(i,j) \in \{0,1\}~ for -n \leq i \leq n,-n \leq j \leq n$). Then for every $\epsilon >0$ there exists an $m$ such that for integers $M>m$ and $N>m$ and any two doubly infinite sequences of 0s and 1s
$A(i,M),-\infty < i <  \infty,$ and
$B(i,N),-\infty < i < \infty, $ we have that 
$| P(X(i,j)=a(i,j),-n \leq i \leq n,-n \leq j \leq n| X(i,M) = A(i,M),-\infty < i < \infty) - P(X(i,j)=a(i,j),-n \leq i \leq n,-n \leq j \leq n| X(i,N) = B(i,N),-\infty < i <  \infty) | <\epsilon $
\end{theorem}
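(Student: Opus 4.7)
The plan is to prove this analogously to the classical renewal theorem, via a coupling argument that exploits the contractive nature of the blue transition kernel on pairs of chains driven by common randomness. Fix $\epsilon>0$ and the pattern $\{a(i,j)\}$, and without loss of generality assume $M \geq N$. I will construct on one probability space two coupled realizations of the backwards blue Markov chain: chain~1 conditioned on $X(\cdot,M)=A$, and chain~2 conditioned on $X(\cdot,N)=B$. The goal is that the two chains agree on every entry of the square $[-n,n]^2$ with probability at least $1-\epsilon$ once $m$ is chosen sufficiently large. This forces the total variation distance between the two conditional laws on the square to lie below $\epsilon$, and in particular bounds the difference of their probabilities of the single event $\{X(i,j)=a(i,j)\}$ appearing in the theorem.

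Run chain~1 alone from column $M$ down to column $N$ using the blue kernel, obtaining a random column $c_1 := X_1(\cdot,N)$; set $c_2 := B$. From column $N$ downward I engage the coupling. To generate column $j$ from column $j+1$, the blue kernel acts independently across positions $i$, putting the new value equal to the parent entry at $f(i)$ with probability $2/3$ and to its flip with probability $1/3$. Given both chains' parent columns at time $j+1$, I couple the two outputs at position $(j,i)$ by a maximal coupling of their Bernoulli marginals: if $c_1(f(i))=c_2(f(i))$ then $X_1(j,i)=X_2(j,i)$ with probability~$1$; if $c_1(f(i))\ne c_2(f(i))$, so that the two marginals are $\mathrm{Bern}(2/3)$ and $\mathrm{Bern}(1/3)$ with the roles of $0$ and $1$ swapped (total variation distance $1/3$), then $X_1(j,i)=X_2(j,i)$ with probability~$2/3$. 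Make the couplings across the different positions $i$ in a single column independent given both parent columns; this is consistent with the blue kernel being itself a product across positions, so the correct marginal law is preserved on each chain.

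Let $D(j,i)$ be the event that the two chains disagree at position $(j,i)$. By construction, conditional on both chains' columns at time $j+1$,
\[
P\bigl(D(j,i)\,\big|\,\text{columns at time } j+1\bigr)\;=\;\tfrac{1}{3}\,\mathbf{1}[\,D(j+1,f(i))\,],
\]
so $P(D(j,i))=\tfrac{1}{3}P(D(j+1,f(i)))$. Iterating from $j$ up to $N$ yields $P(D(j,i))=(1/3)^{N-j}P(D(N,f^{(N-j)}(i))) \le (1/3)^{N-n}$ for every $(i,j)$ in the square. A union bound over the $(2n+1)^2$ lattice points then gives
\[
P(\text{disagreement somewhere in the square}) \;\le\; (2n+1)^2\,(1/3)^{N-n},
\]
and choosing $m$ large enough that $(2n+1)^2(1/3)^{m-n}<\epsilon$ completes the argument for all $M\ge N>m$.

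The main obstacle is verifying two supporting facts that keep the coupling honest: first, that the maximal coupling of $\mathrm{Bern}(2/3)$ and $\mathrm{Bern}(1/3)$ with swapped labels really does attain agreement probability $2/3$ (a direct two-by-two linear-programming check on joint laws with prescribed Bernoulli marginals, where the extremal joint is unique); and second, that applying this coupling independently across positions $i$ within a column is a legitimate joint law whose per-chain marginals remain the blue kernel, which works because the blue kernel itself is a product across positions given the previous column. Once these are in hand, running chain~1 alone from $M$ down to $N$ before the coupling is engaged poses no difficulty, since the contraction is only needed from time $N$ downward, at which point $c_1$ simply acts as a random initial condition to be compared with the deterministic $c_2=B$.
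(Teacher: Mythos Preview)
Your proof is correct and follows essentially the same coupling-along-ancestor-lines argument as the paper: run the farther chain down to the nearer starting column, then couple the two chains step by step so that agreement at a parent forces agreement at the child, and disagreement at a parent still gives a fixed positive probability of agreement at the child, yielding geometric decay of the disagreement probability along each ancestor path into the target square. The only difference is that you use the maximal coupling when parents disagree (giving per-step disagreement probability $1/3$), whereas the paper couples independently in that case (giving agreement probability $4/9$, hence disagreement $5/9$); your version therefore yields a slightly sharper and more explicit bound, but the underlying idea is identical.
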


For exactly the same reason as in the finite case, if we can prove \tref{ren} then there is a unique stationary measure for the Markov chain. 

\begin{proof}

We use one of the proofs that is used in the finite case. Let $X(i,j)$ be the Markov chain with the blue transition probabilities given running from the $M$ column back given 

$X(i,M) = A(i,M):-\infty < i <  \infty$ 

\noindent and let $Y(i,j)$ be the Markov chain with the blue transition probabilities from the $N$ column back given

$Y(i,N) = B(i,N):-\infty < i < \infty$

The proof of \tref{ren} will be complete if so long as $m$ is sufficiently large we can couple the processes $X(i,j)$ and $Y(i,j)$ so that with probability $>$ 1- \epl, the two processes are identical for all $1\leq i \leq n, 1\leq j \leq n$. 

\begin{defn} \label{defn:f}
$f$  is the infinite to one map from the integers to the integers which we use to define the Markov chain.
\end{defn}  

\begin{defn} \label{defn:father}
For every element of $(a,b)  \in \mathbb{Z}^2 $ we say that $(a+1,f(b))$ is the father of $(a,b)$
\end{defn}  
\begin{defn} \label{defn: ancestor}
Ancestor is the transitive closure of the relationship ``father".
(e.g.if $a$ is the father of $b$ is the father of $c$ is the father of $d$ then $a$ is an ancestor of $d$)
\end{defn}

\begin{lemma}\label{lem:unique} 
If $a<b$ are integers and $c$ is an integer then there is $a$ unique $d$ such that $(b,d)$ is an ancestor of $(a,c)$.
\end{lemma}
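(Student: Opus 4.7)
The plan is to exploit the fact that the ``father'' relation is actually a single-valued function: given $(a,b) \in \mathbb{Z}^2$, its father $(a+1, f(b))$ is uniquely determined because $f$ is a function (even though it is not injective). Since ancestor is the transitive closure of father, every ancestor chain starting at $(a,c)$ is obtained by iterating this function. Writing $\Phi(a,b) := (a+1, f(b))$, the $k$-th iterate is $\Phi^k(a,c) = (a+k, f^k(c))$, and this is the only chain of fathers one can follow upward from $(a,c)$.

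First I would verify by a short induction on $k \geq 1$ that $\Phi^k(a,c)$ is an ancestor of $(a,c)$: the base case $k=1$ is the definition of father, and the inductive step appends one more father relation at the top of the chain. This establishes existence: for $b > a$, take $d := f^{b-a}(c)$, so that $(b, d) = \Phi^{b-a}(a,c)$ is an ancestor of $(a,c)$.

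For uniqueness, I would argue as follows. Suppose $(b, d)$ is an ancestor of $(a,c)$ via a chain
\[
(a,c) = (x_0, y_0),\ (x_1, y_1),\ \ldots,\ (x_\ell, y_\ell) = (b,d),
\]
where each $(x_{i+1}, y_{i+1})$ is the father of $(x_i, y_i)$. Then $x_{i+1} = x_i + 1$ and $y_{i+1} = f(y_i)$ for all $i$, so $x_i = a + i$ and $y_i = f^i(c)$. Setting $i = \ell$ forces $\ell = b - a$ and $d = f^{b-a}(c)$, which is the same value produced above. Hence $d$ is uniquely determined.

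There is really no obstacle here; the whole content of the lemma is that $\Phi$ is a well-defined function of two coordinates, so its iterates are unambiguous despite $f$ being only infinite-to-one (not invertible). The infinite-to-one character of $f$ plays no role in this lemma — it will only matter later, when one tries to go \emph{downward} from $(b,d)$ and finds many descendants. I would therefore keep the proof to a few lines and emphasize the asymmetry: moving up through fathers is deterministic, moving down through descendants is branching.
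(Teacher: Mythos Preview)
Your proof is correct and follows exactly the same idea as the paper's: both rely on the observation that each point has a unique father whose first coordinate increases by one, so iterating the father map $b-a$ times from $(a,c)$ lands on a uniquely determined point with first coordinate $b$. The paper simply states this in one sentence, while you spell out the induction and the chain argument explicitly.
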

\begin{proof} This follows trivially from the fact that every $(e,f)$ has exactly one father and that father has $x$ coordinate $e+1$.
\end{proof}

Conclusion of proof of \tref{ren}: 
We now describe the coupling. Assume WLOG that $M \leq N$. Recall that the Markov chain runs backwards (In the forwards direction each column determines the next.). Start with the condition 
$Y(i,N) = B(i,N):-\infty  < i < \infty$ and run that process until the second coordinate reaches $M$. Then start running the process starting $X(i,M) = A(i,M):-\infty<i< \infty$. We start letting $X(i,M), -\infty <i<\infty$ be independent of $Y(i,M) -\infty < i < \infty$. For a given $(j,M)$ the two processes may or may not exhibit the same number (either both 1 or both 0). If they do then they have the same probability law for $(k,M-1)$ for any $k$ where $f(k)=j$ so we can couple them to be the same for $(k,M-1)$ for any such $k$. Otherwise for such a $k$ we couple them independently and that gives the probability that they end up the same at $(k,M-1)$ to be 4/9. We couple each coordinate of the $M-1$ column independently. Continue coupling in this manner.  In other words if a given father is the same for both processes then his son is the same for both processes and if the fathers differ then the sons are the same with probability 4/9. We continue that coupling from then on making all couplings of a given column independent given the next column (keep in mind that the next column is actually the previous column from the standpoint of the backwards running Markov chain). For a given $(i,j):-n\leq i\leq n,-n\leq j\leq n$ there is a unique $k$ such that $(k,M)$ is an ancestor of $(i,j)$ so if the two values on $(k,M)$ are the same they will remain the same for the son, and hence for the grandson etc. and hence ultimately they will be the same for $(i, j)$. Even if they differ on $(k,M)$ they have a 4/9 chance that they will be the same on the son of $(k,M)$ and then if they differ on the son they have a 4/9 chance of agreeing on the grandson etc. Once they agree they will continue to agree so since there are many generations between $(M,k)$ and $(i,j)$ they will almost certainly agree on $(i,j)$.  In fact even if they don't agree by time the second coordinate reaches $m$ if $m$ is large there is a lot of time for them to agree after that. Since this is true  for all $(i,j) -n\leq i\leq n,-n\leq j\leq n$ it follows that they agree on that square with a probability bounded below by a function of $m$ which goes to 1 as $m$ approaches $\infty$.
\end{proof}

\begin{theorem}\label{thm:bernoulli}
With the above Markov chain, the transformation which maps a configuration $X(i,j)=a(i,j); -\infty\leq i\leq \infty,- \infty \leq j\leq \infty$ to 
$X(i,j)=a(i-1,j); -\infty \leq i\leq \infty,- \infty \leq j\leq \infty$
is Bernoulli.
\end{theorem}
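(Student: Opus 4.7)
My plan is to deduce Bernoullicity from Ornstein's isomorphism theory via the Very Weak Bernoulli (VWB) criterion, directly feeding the coupling constructed in the proof of \tref{ren} into the VWB estimate. I view the stationary measure on the 2D configuration $\{X(i,j)\}$ under the horizontal shift $T$ as a process whose letter at time $i$ is the full column $Y_i := (X(i,j))_{j\in\mathbb{Z}}$; by the definition of the blue transitions, $\{Y_i\}$ is a stationary Markov chain in the variable $i$ (with transitions in the direction of decreasing $i$). Although each $Y_i$ lives in the uncountable space $\{0,1\}^{\mathbb{Z}}$, the increasing sequence of finite partitions $P_N$ cut out by $(X(0,j))_{|j|\le N}$, together with their $T$-translates, generates the full $\sigma$-algebra, so by Ornstein's theorem it suffices to verify the VWB property for each $P_N$ with a rate that does not degrade as $N\to\infty$.

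For a fixed $N$ and $\epsilon>0$, I would bound in $\bar d$ the conditional distribution of a future window $(T^{-k}P_N,\ldots,T^{0}P_N)$ given any past against the unconditional distribution. The Markov property reduces this to bounding $\bar d$ between the conditional distributions starting from any two conditioning columns $Y_m=A$ and $Y_m=B$ at some distant past index $m$. The coupling from the proof of \tref{ren} is designed for exactly this task: matched fathers propagate to matched sons using the same blue coin, while mismatched fathers produce sons that still agree with probability $4/9$. By \lref{unique} each site in the window $[-k,0]\times[-N,N]$ has a unique ancestor in column $m$, so the probability of disagreement at that site decays like $(5/9)^{m-i}$ purely through generational distance.

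Summing these site-wise disagreement probabilities and normalizing by the window area gives a bound on the expected per-site disagreement that is of order $(5/9)^m/(k+1)$, independent of the initial columns $A,B$, of the spatial width $N$, and of the temporal horizon $k$. Choosing $m$ large (depending only on $\epsilon$) drives this below $\epsilon$, giving the VWB bound for $P_N$ uniformly in $N$.

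The main technical obstacle is not in the coupling, which is already embedded in the proof of \tref{ren}, but in upgrading the VWB property for each finite $P_N$ to Bernoullicity of the full horizontal shift with its inherently uncountable per-column alphabet. The uniformity of the rate in $N$ obtained above is what makes this upgrade clean: the countable generator $\bigvee_N P_N$ inherits the VWB property, and Ornstein's theorem then delivers isomorphism of $T$ to a Bernoulli shift, completing the proof.
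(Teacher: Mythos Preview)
Your approach is essentially the paper's: both verify a VWB-type coupling criterion for the finite partitions $P_N$ by feeding in the coupling already built in the proof of \tref{ren}, and then invoke Ornstein's results (the paper's Results 1 and 2, plus the trivial Result 3 on time reversal) to pass from Bernoullicity of each $T,P_N$ process to Bernoullicity of $T$.

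One remark on your last paragraph: the uniformity in $N$ you emphasize is neither needed nor quite what your per-site estimate delivers. The $\bar d$-distance relevant to VWB for $P_N$ counts disagreements of $P_N$-\emph{symbols} at each time $i$, and a $P_N$-symbol disagrees as soon as any one of the $2N+1$ sites $(i,j)$, $|j|\le N$, disagrees; the union bound therefore introduces a factor $(2N+1)$ into your bound, so the rate does depend on $N$. This is harmless, because Ornstein's Result 1 only requires each $T,P_N$ process to be Bernoulli, with no uniform rate across $N$. So you should simply fix $N$, absorb the $(2N+1)$ factor, take $k$ large, conclude VWB and hence Bernoulli for that $P_N$, and then apply Result 1---exactly as the paper does. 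The detour through ``the countable generator $\bigvee_N P_N$ inherits VWB'' is unnecessary.
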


Idea of proof: Reading backwards this is essentially a mixing Markov chain and using the proof of the analog of the renewal theorem, the proof that it is Bernoulli is identical to the proof that a mixing Markov chain on a finite state space is Bernoulli.

\begin{proof} Ornstein showed the following two results \cite{ornstein1} and \cite{ornstein2} resp. and result 3 is trivial directly from the definition of Bernoulli. 
\vskip .2cm
Ornstein Result 1:
Let $P_n$ be an increasing sequence of finite partitions which separate points and let $T$ be a measure preserving transformation. If the $T,P_n$ process is Bernoulli for all $n$ then $T$ is Bernoulli. 
\vskip .2cm
Ornstein Result 2:
Let $P$ be a finite partition and $T$ be a measure preserving transformation. If for every $\epsilon>0$ there is a coupling of two copies of the $T,P$ process so that they are independent in the past and the expected mean hamming distance of the $n$ future is less than \ep for sufficiently large $n$ then the $T,P$ process is Bernoulli. 
\vskip .2cm
Result 3:
The $T,P$ process is Bernoulli iff the $T^{-1},P$ process is Bernoulli. (this is trivial)
\vskip .2cm
From the above three lemmas,
\vskip 0 cm
Letting
\vskip .2cm
    $W(k)$ be the value of $X(i,j): -n\leq i\leq  n,  k-n\leq  j\leq k+n$ 
\vskip .2cm
if
\vskip .2cm
we can couple the $W$ process with another process referred to as the $Y$ process whose distribution is identical to the $W$ process so that
$W(k):k\geq 0$ is independent of $Y(k):k\geq 0$ but for sufficiently large $n$, 

$P( \#\{i:-n<i<0~$ and $~W(i) \neq Y(i)\}>\epsilon n)<\epsilon.$
\vskip .2cm
then
\vskip .2cm
The $X$ process is Bernoulli and the proof will be complete. 
\vskip .2cm
Extending the $W$ process to the $X$ process (on the whole two dimensional lattice) and extending the $Y$ process to a process to be called the $X'$ process with the same distribution as the $X$ process, it suffices to get a coupling of the entire $X(i,j)$ process with the entire $X'(i,j)$ process such that
\begin{align}
&1)~	\text{$X(i,j)~ j\geq -n$ is independent of $X'(i,j)~ j\geq -n$.} \notag \\
&2)~	\text{For $k$ sufficiently large $P(W(-k) \neq Y(-k))<\epsilon^3$} \notag
\end{align}
The only significance of $\epsilon^3$ is that it is smaller than $\epsilon^2$.

But we are already done because we can just couple the two processes independently for $j\geq -n$ and then couple them as in the proof of the analog to the renewal theorem for $j<-n$ and the proof of that theorem establishes (2). 
\end{proof}

\end{document}